\theoremstyle{plain}
\numberwithin{equation}{subsection}
\theoremstyle{plain}
\newtheorem{Proposition}[equation]{Proposition}
\newtheorem{Theorem}[equation]{Theorem}
\newtheorem{Lemma}[equation]{Lemma}
\theoremstyle{definition}
\newtheorem{Definition}[equation]{Definition}
\newtheorem{Example}[equation]{Example}
\newtheorem{Remark}[equation]{Remark}
\def\phi{\varphi}
\renewcommand{\leq}{\leqslant}
\renewcommand{\geq}{\geqslant}
\newcommand{\D}{\mathbb{D}}
\newcommand{\T}{\mathbb{T}}
\newcommand{\Pol}{\mathscr{P}}
\begin{document}



\title{More Properties of Optimal Polynomial Approximants in Hardy Spaces}
\author[Cheng]{Raymond Cheng}
\address{Department of Mathematics and Statistics, Old Dominion University, Norfolk, VA 23529, USA. } \email{rcheng@odu.edu}
\author[Felder]{Christopher Felder}
\address{Department of Mathematics, Indiana University, Bloomington, IN 47405, USA. } \email{cfelder@iu.edu}
\subjclass[2020]{Primary 30E10; Secondary 46E30.}
\keywords{Optimal polynomial approximant, Pythagorean inequality, duality, fixed point.}

\date{\today}

\begin{abstract}
This work studies optimal polynomial approximants (OPAs) in the classical Hardy spaces on the unit disk, $H^p$ ($1 < p < \infty$). For fixed $f\in H^p$ and $n\in\mathbb{N}$, the OPA of degree $n$ associated to $f$ is the polynomial which minimizes the quantity $\|qf-1\|_p$
over all complex polynomials $q$ of degree less than or equal to $n$.
We begin with some examples which illustrate, when $p\neq2$, how the Banach space geometry makes these problems interesting. We then weave through various results concerning limits and roots of these polynomials, including results which show that OPAs can be witnessed as solutions of certain fixed point problems. 
Finally, using duality arguments, we provide several bounds concerning the error incurred in the OPA approximation. 
\end{abstract}

\maketitle


\tableofcontents


\section{Introduction}
This paper concerns a minimization problem in classical Hardy spaces on the unit disk $\mathbb{D}$,
\[
H^p := \left\{ f\in \operatorname{Hol}(\mathbb{D}) : \sup_{0 \le r < 1} \int_0^{2\pi} \left|f(re^{i\theta})\right|^p \, d\theta < \infty \right\},
\]
where  $\operatorname{Hol}(\D)$ denotes the collection of holomorphic functions on $\D$. 
As is standard, for $1\le p < \infty$, we denote the norm of $f \in H^p$ as 
\[
\|f\|_p := \left(\sup_{0 \le r < 1} \int_0^{2\pi} \left|f(re^{i\theta})\right|^p \, d\theta\right)^{1/p}.
\]
When $p=\infty$, we have the set of bounded analytic functions
\[
H^\infty := \left\{f\in \operatorname{Hol}(\mathbb{D}) : \sup_{z \in \D}|f(z)| < \infty \right\},
\]
with corresponding norm
\[
\|f\|_\infty := \sup_{z \in \D}|f(z)|.
\]
We will frequently view these spaces as subspaces of the Lebesgue spaces $L^p := L^p(\T, dm)$, where $dm$ is normalized Lebesgue measure on the unit circle $\T$.

Our main objects of study in this paper are \textit{optimal polynomial approximants} (OPAs) in Hardy spaces; these are solutions to minimization problem
\[
\inf_{q \in \Pol_n}\|qf-1\|_p, 
\]
where $f \in H^p$ and $\Pol_n$ is the set of complex polynomials of degree less than or equal to $n$. We point out that the infimum above is actually a minimum. For in a uniformly convex Banach space, any closed subspace enjoys a unique nearest point property. In our context, this means the problem of finding a degree $n$ OPA can be restated as finding the solution to 
\[
\inf_{h \in f\Pol_n}\|h - 1\|_p,
\]
which is given by the \textit{metric projection} of $1$ on the subspace $f\Pol_n$. 
A priori, the minimizing argument may not be unique. However, when $1<p<\infty$, it is well-known that there is, in fact, a unique minimizing polynomial (due to the uniform convexity of the space). 
When $p\neq 2$, the projection is \textit{non-linear}, which starkly contrasts the Hilbert space setting. 

For $p\neq2$, this problem was originally studied by Centner \cite{Cent}, and considered again in an additional paper by Centner and the authors \cite{CCF}. We will give some background now, but point the reader to \cite{Cent, CCF} for more thorough exposition, and to \cite{BCLSS1, BCLSS2, BFKSS, BKLSS, BKKLSS, Cent, FMS, SS2, SS1} for relevant work on OPAs in various Hilbert spaces.

When $p=2$, this problem was first studied by engineers in work related to digital filter design. The problem reemerged later as a potential way to study cyclic vectors for the forward shift (see \cite{BC} for historical discussion). This renewed interest is evidenced by many papers over the last decade (again, see \cite{BC}, as well as \cite{MR4396661, arora2022optimal} for recent results in the weighted and non-commutative settings, respectively). Other than the work in \cite{CRSX}, these results concern only Hilbert spaces, where the geometry makes computation of OPAs an explicit (but non-trivial!) linear algebra exercise. For example, in $H^2$, the coefficients (say, $a_0, \ldots, a_n$) of the OPA of degree $n$ associated to a function $f \in H^2$ can be found via the linear system
\begin{equation}\label{opt-sys}
\left(\langle S^jf, S^kf \rangle_{H^2}\right)_{0 \le j,k \le n} (a_0, \ldots, a_n)^T = \left(\overline{f(0)}, 0, \ldots, 0 \right)^T,
\end{equation}
where $S$ is the forward shift operator, given by $f(z) \mapsto zf(z)$ (see, e.g., \cite[Theorem~2.1]{FMS}). 

In the Banach space setting (e.g. $H^p$, $p \neq 2$), there is not a direct analogue of this exercise, and the non-linearity of the metric projection makes explicit calculation of OPAs a highly non-trivial task. In the next section, we will state precisely the definition of optimal polynomial approximant. Before moving there, let us give an outline of the paper:
\begin{itemize}
\item Section \ref{prelim} will formally introduce the OPA problem, give some background information concerning the geometry of Banach spaces, and provide some examples illustrating how this geometry differs from that of Hilbert space. 
\item The results of Section \ref{cont} are broken into three parts:
    \begin{itemize}
        \item Convergence of OPAs under variance of the parameters of the OPA problem (e.g., $n, p$, and $f$). 
        \item The location of roots of OPAs.
        \item Constant and linear OPAs as solutions to a fixed point problem. 
    \end{itemize}
\item Using duality, Section \ref{error} establishes various bounds for the error $\|qf - 1\|_p$.
\end{itemize}


\section{Preliminaries \& Geometric Oddities}\label{prelim}

We begin here by providing some background material concerning the geometry of Banach spaces, followed by several examples in $H^p$ which illustrate some oddities that arise when $p \neq 2$.

Let $\mathbf{x}$ and $\mathbf{y}$ be vectors belonging to a normed linear space $\mathscr{X}$.  We say that $\mathbf{x}$ is {\em orthogonal} to $\mathbf{y}$ in the Birkhoff-James sense \cite{AMW, Jam}  if
\begin{equation}\label{2837eiywufh[wpofjk}
      \|  \mathbf{x} + \beta \mathbf{y} \|_{\mathscr{X}} \geq \|\mathbf{x}\|_{\mathscr{X}}
\end{equation}
for all scalars $\beta$. 
In this situation we write $\mathbf{x} \perp_{\mathscr{X}} \mathbf{y}$. 
In the case $\mathscr{X} = L^p$,  let us write $\perp_p$ instead of  $\perp_{L^p}$, and similarly for $\mathscr{X} = H^p$.  
 
For $1<p<\infty$, there is also a function-theoretic test for $p$-orthogonality, which we note now. 

\begin{Theorem}[James \cite{Jam}]
Suppose $1<p<\infty$.  Then for $f$ and $g$ belonging to $L^p$, we have
\begin{equation}\label{BJp}
 {f} \ \perp_{p} \ {g}  \iff   \int_\T |f|^{p - 2} \overline{f} g \,dm  = 0,
\end{equation}
where any occurrence of ``$|0|^{p - 2} 0$'' in the integrand is interpreted as zero.
\end{Theorem}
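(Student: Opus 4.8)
The plan is to recast Birkhoff--James orthogonality as a first-order (critical point) condition for a minimization problem. Fix $f, g \in L^p$ and define the real-valued function $\Phi(\beta) = \|f + \beta g\|_p^p = \int_\T |f + \beta g|^p \, dm$ of the complex variable $\beta$, which I regard as a function on $\R^2$ by writing $\beta = s + it$. Since $r \mapsto r^p$ is strictly increasing on $[0,\infty)$, the condition $f \perp_p g$ in \eqref{2837eiywufh[wpofjk} is equivalent to saying that $\beta = 0$ is a global minimizer of $\Phi$. Moreover, $\beta \mapsto |f(x) + \beta g(x)|$ is convex for each fixed $x$ (it is the composition of an affine map with a norm), and $r \mapsto r^p$ is convex and increasing, so the integrand, and hence $\Phi$, is convex in $\beta$. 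Thus $\beta = 0$ minimizes $\Phi$ if and only if $\Phi$ has a critical point at $0$, reducing the theorem to the computation of $\nabla\Phi(0)$.

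The heart of the argument is to show $\Phi$ is continuously differentiable and to identify its gradient. Using $|w|^p = (w\overline{w})^{p/2}$ with $w = f + \beta g$ and differentiating in $s$ and $t$, a direct computation gives pointwise
\[
\frac{\partial}{\partial s}|w|^p = p\,|w|^{p-2}\operatorname{Re}(\overline{w}g), \qquad \frac{\partial}{\partial t}|w|^p = -p\,|w|^{p-2}\operatorname{Im}(\overline{w}g),
\]
where at points with $w = 0$ the derivative is interpreted as $0$; this is legitimate precisely because $p > 1$, so $\beta \mapsto |c + \beta d|^p$ is differentiable with vanishing gradient at any root, matching the convention ``$|0|^{p-2}0 = 0$''. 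To pass these derivatives through the integral, I would produce an $L^1$ dominating function: on any ball $|\beta| \le R$ the integrands above are bounded in modulus by $|w|^{p-1}|g| \le (|f| + R|g|)^{p-1}|g|$, and by H\"older's inequality (with exponents $p' = p/(p-1)$ and $p$) this majorant lies in $L^1(\T)$. Differentiation under the integral sign is then justified, yielding
\[
\nabla\Phi(\beta) = \left( p\operatorname{Re}\!\int_\T |w|^{p-2}\overline{w}g\,dm, \ -p\operatorname{Im}\!\int_\T |w|^{p-2}\overline{w}g\,dm \right).
\]

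Evaluating at $\beta = 0$ (so $w = f$) shows that $\nabla\Phi(0) = 0$ if and only if both the real and imaginary parts of $\int_\T |f|^{p-2}\overline{f}g\,dm$ vanish, i.e. if and only if the integral in \eqref{BJp} is zero. Combined with the reduction above, this gives both implications at once: if $f \perp_p g$ then $0$ is a minimizer, so $\nabla\Phi(0) = 0$ and the integral vanishes; conversely, if the integral vanishes then $\nabla\Phi(0) = 0$, and convexity of $\Phi$ promotes this critical point to a global minimum, giving $f \perp_p g$. The main obstacle I anticipate is the justification of differentiation under the integral for $1 < p < 2$, where $|w|^{p-2}$ is singular on the zero set of $w$; the point to stress is that the full integrand $|w|^{p-2}\overline{w}g$ has modulus $|w|^{p-1}|g| \to 0$ there, so the singularity is harmless and the dominating function above controls everything uniformly for $\beta$ in a neighborhood. (The degenerate case $f = 0$ is trivial, since then both sides hold automatically.)
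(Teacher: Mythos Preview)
Your argument is correct: the reduction to a critical-point condition via convexity, the pointwise differentiation of $|f+\beta g|^p$, and the domination by $(|f|+R|g|)^{p-1}|g|\in L^1$ (H\"older with exponents $p/(p-1)$ and $p$) are all in order, and the care you take with the case $1<p<2$ at the zero set of $w$ is exactly what is needed. The paper itself gives no proof of this theorem; it simply states the result and attributes it to James \cite{Jam}, so there is nothing to compare against beyond noting that your variational proof is the standard one.
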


In light of \eqref{BJp} we adopt, for a measurable function $f$ and any $s > 0$, the notation
\begin{equation}\label{definition-de-zs}
f^{\langle s \rangle} := |f|^{s-1}\overline{f}.
\end{equation}

If $f \in L^p$, then $f^{\langle p - 1\rangle}\in L^q$, where $q$ is the classical H\"{o}lder conjugate to $p$, satisfying $\frac1p + \frac1q = 1$.
For $g \in L^p$ and $f \in L^q$, we use the standard notation for the dual pairing
\[
\langle f, g \rangle = \int_\T f \overline{g} \, dm,
\]
and from \eqref{BJp}, we have
\begin{equation}\label{ppsduwebxzrweq5}
f \ \perp_p \ g \iff \langle g, f^{\langle p - 1\rangle} \rangle = 0.
\end{equation}
Consequently, the relation $\perp_{p}$ is linear in its second argument when $1<p<\infty$, and it then makes sense to speak of a vector being orthogonal to a subspace. We use this now to formally define OPAs.
\begin{Definition}[OPA]
Let $1<p<\infty$ and let $f \in H^p$. Given a non-negative integer $n$, the \textit{$n$-th optimal polynomial approximant} to $1/f$ in $H^p$ is the polynomial solving the minimization problem
\[
\min_{q \in \Pol_n}\|qf - 1\|_p,
\]
where $\Pol_n$ is the set of complex polynomials of degree less than or equal to $n$. This polynomial exists, is unique, and will be denoted by
\[
q_{n,p}[f].
\]
\end{Definition}

Given previous discussion on the metric projection, it is immediate that 
\[
1 - q_{n,p}[f] f \  \perp_p  \ \bigvee\{ f, zf, z^2 f,\ldots, z^n f \}.
\]
We note that we will use the notations $z^kf$ and $S^kf$ interchangeably when there is no risk of confusion. 
We will also use the notation $[f]_p$ to denote the closure of $\bigvee\{ f, zf, z^2 f, z^3f, \ldots \}$ in $H^p$, i.e., 
\[
[f]_p := \overline{\bigvee\{ f, zf, z^2 f, z^3f, \ldots \} }^{\ H^p}.
\]
In order to avoid trivialities, we will also often ask that $f(0) \neq 0$; in the case that $f(0) = 0$, this is equivalent to $1 \ \perp_p f$, and so the metric projection of 1 onto $f\Pol_n$ is identically zero.

In connection with Birkhoff-James orthogonality, there is a version of the Pythagorean Theorem for $L^p$.  
This theorem takes the form of a family of inequalities relating the lengths of orthogonal vectors with that of their sum \cite[Corollary 3.4]{CR}.

\begin{Theorem}\label{pythagthm}
Suppose that $x\ \perp_p \ y$ in $L^p$.
If $p \in (1, 2]$, then
\begin{align*}
   \| x + y \|^p_p & \leq  \|x\|^p_p + \frac{1}{2^{p-1}-1}\|y\|^p_p \\
    \| x + y \|^2_p & \geq  \|x\|^2_p + (p-1)\|y\|^2_p.
\end{align*}
If $p \in [2, \infty)$, then
\begin{align*}
   \| x + y \|^p_p & \geq  \|x\|^p_p + \frac{1}{2^{p-1}-1}\|y\|^p_p \\
    \| x + y \|^2_p & \leq  \|x\|^2_p + (p-1)\|y\|^2_p.
\end{align*}
\end{Theorem}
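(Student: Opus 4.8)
The plan is to split the four inequalities into two families: the two ``$p$-th power'' estimates (with constant $\frac{1}{2^{p-1}-1}$) and the two ``square'' estimates (with constant $p-1$). Throughout, the single structural fact I will exploit is that, by James's theorem, $x \perp_p y$ is equivalent to $\int_\T |x|^{p-2}\overline{x}\,y\,dm = 0$; in particular the real part of this integral vanishes, which is exactly what lets a first-order ``cross term'' drop out.

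For the $p$-th power inequalities I would reduce everything to a pointwise (scalar) inequality and then integrate. The claim is that for $p \ge 2$ and all $a,b \in \C$,
\[
|a+b|^p \ \ge\ |a|^p + p\,\mathrm{Re}\big(|a|^{p-2}\overline{a}\,b\big) + \tfrac{1}{2^{p-1}-1}\,|b|^p,
\]
with the inequality reversed for $1 < p \le 2$ (and the convention $|0|^{p-2}0 = 0$). Granting this, I set $a = x(e^{i\theta})$ and $b = y(e^{i\theta})$, integrate $dm$ over $\T$, and observe that the middle term is exactly $p\,\mathrm{Re}\int_\T |x|^{p-2}\overline{x}\,y\,dm = 0$ by the orthogonality hypothesis. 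What survives is precisely $\|x+y\|_p^p \ge \|x\|_p^p + \frac{1}{2^{p-1}-1}\|y\|_p^p$ for $p\ge 2$, and the reversed statement for $1<p\le 2$. The work is thus concentrated in the scalar inequality, whose sharp constant can be checked at the degenerate configuration $a = 0$ (where it forces exactly $2^{p-1}-1 \ge 1$ iff $p \ge 2$) and at $p = 2$ (where it degenerates to the pointwise parallelogram identity, since $\frac{1}{2^{p-1}-1} = 1$). Establishing this sharp two-variable inequality, which is essentially the pointwise content behind Clarkson's inequalities, is the first of the two main obstacles.

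The square inequalities require a genuinely different, global argument, since $\|\cdot\|_p^2$ is not the integral of a pointwise square. Here I would study the scalar function $\phi(t) := \|x + ty\|_p^2$ for real $t$. Differentiating under the integral (valid for $1<p<\infty$, with the usual care when $x+ty$ vanishes on a set of positive measure, handled by approximation) gives $\phi'(0) = 2\|x\|_p^{2-p}\,\mathrm{Re}\int_\T |x|^{p-2}\overline{x}\,y\,dm = 0$, again by James's theorem. The crux is then the \emph{sharp} second-order bound $\phi''(t) \le 2(p-1)\|y\|_p^2$ for $p \ge 2$ (respectively $\phi''(t) \ge 2(p-1)\|y\|_p^2$ for $1 < p \le 2$), uniformly in $t$; this is exactly the sharp $2$-uniform smoothness (resp.\ convexity) modulus of $L^p$, with its optimal constant $p-1$. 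Given this bound, the Taylor expansion with integral remainder,
\[
\phi(1) = \phi(0) + \phi'(0) + \int_0^1 (1-t)\,\phi''(t)\,dt,
\]
together with $\phi'(0)=0$ and $\int_0^1(1-t)\,dt = \tfrac12$, yields $\|x+y\|_p^2 \le \|x\|_p^2 + (p-1)\|y\|_p^2$ (resp.\ $\ge$), the desired estimate with the sharp constant.

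The main obstacle, in both families, is obtaining the \emph{optimal} constant rather than merely a constant. A naive route for the square inequalities, feeding $u=x$, $v=y$ into the sharp smoothness inequality $\|u+v\|_p^2 + \|u-v\|_p^2 \le 2\|u\|_p^2 + 2(p-1)\|v\|_p^2$ and discarding $\|x-y\|_p^2 \ge \|x\|_p^2$ (which holds since $x \perp_p y$ implies $x \perp_p -y$), loses a factor of $2$ and gives only the constant $2(p-1)$. The second-order expansion recovers the missing factor $\tfrac12$ precisely because $\phi'(0) = 0$ forces $\phi$ to be flat to first order at the projection. I expect verifying the sharp modulus bound on $\phi''$ (equivalently, the sharp scalar inequality for the $p$-th power family) to be the delicate technical heart; everything else is bookkeeping. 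Since these estimates are recorded as \cite[Corollary 3.4]{CR}, one may alternatively cite them directly, but the route above is the one I would reconstruct.
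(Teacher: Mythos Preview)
The paper does not prove this theorem; as you note at the end of your proposal, it is quoted as \cite[Corollary~3.4]{CR} and used as a black box throughout. Your reconstruction of the argument behind that citation is essentially correct: in both families, the James criterion makes the first-order cross term vanish, and the content reduces to the second-order estimate you identify.

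Two refinements. For the $p$-th power family, your scalar inequality is true but not sharp pointwise: for $p=4$ one computes, writing $a=u+iv$, that $|a+1|^4 - |a|^4 - 4\,\mathrm{Re}(|a|^2\bar a) = 6u^2 + 4u + 2v^2 + 1 \ge \tfrac13$, whereas $\tfrac{1}{2^{p-1}-1}=\tfrac17$; so your ``check at $a=0$'' is only a consistency check on the direction of the inequality, not a sharpness argument. This does no damage, since the theorem does not assert sharpness and the weaker constant suffices. For the square family, you can sidestep the second-derivative bound and its regularity issues entirely: the Ball--Carlen--Lieb inequality $\|u+v\|_p^2 + \|u-v\|_p^2 \le 2\|u\|_p^2 + 2(p-1)\|v\|_p^2$ (for $p\ge 2$, reversed for $1<p\le 2$) is exactly midpoint concavity (resp.\ convexity) of $t\mapsto \|x+ty\|_p^2 - (p-1)t^2\|y\|_p^2$; concavity together with the vanishing derivative at $t=0$ gives the inequality at $t=1$ directly, with no Taylor remainder needed. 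This recovers the constant $p-1$ with no loss and confirms your intuition that the condition $\phi'(0)=0$ is precisely what saves the factor of $2$.
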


These Pythagorean inequalities enable us to obtain bounds and estimates when $p\neq 2$, in lieu of exact calculations possible in the Hilbert space case.


The following examples illustrate some of the ways the geometry of $H^p$ ($p\neq 2$) can run counter-intuitive to experience in Hilbert space. Although these examples may not be immediately surprising to the Banach space enthusiast, we relay them for the general functional analyst, especially working in linear approximation problems, as interesting observations related to natural geometric questions.

\begin{Example}
In a Hilbert space, an orthogonal projection is always a contraction. However, when $p\neq 2$, the norm of the metric projection of a vector can exceed the length of the vector itself.  

Consider the linear OPA for $f(z) = 1 + 0.5z$ in $H^4$. Numerically, we find that $Q(z) := q_{1,4}[f] \approx 0.9771018  -  0.4339644z$, and thus
\[
     \|Qf\|_4^4 = 1.10294 > 1.
\]
\end{Example}

\begin{Example}
In $H^2$, it is simple to verify that if $F(0) = 0$, then, for $c\in \mathbb{C}$, the quantity
\[
\| c + F(z) \|_2
\]
is minimized when $c =0$. However, this is not the case when $p \neq 2$. 

For example, let $p=4$ and $F(z) = z + 2z^2$. Then
\[
      \| c + F \|_4^4  =  33 +8c +  20 c^2 + c^4.
\]
Numerically, this is minimized when $c \approx -0.199209$.  In particular, the value of the minimizing argument can be nonzero when $p \neq 2$.
\end{Example}

\begin{Example}
Notice for $f \in H^2$ and any $n > 0$, 
we have 
\[
1 - q_{n, 2}[f]f \ \perp_2 \ zf \ \ \ \text{and} \ \ \ 1 \ \perp_2 \ zf.
\]
Using linearity, we have
\[
q_{n,2}[f]f = 1 - (1 - q_{n,2}[f]f)\ \perp_2 \ zf.
\]
It is natural to ask if this is true when $p\neq 2$, i.e., is it true in general that 
\[
q_{n,p}[f]f \ \perp_p \ zf?
\]

Let us take $p=4$, $n=1$, and $f(z) = 1 + 2z + z^8$.
Numerically, one can find that
\[
       \int_{\mathbb{T}} (q_{1,p}[f]f)^{\langle p-1\rangle}zf\,dm \approx 0.00355837,
\]
which is nonzero, and so orthogonality fails. This illustrates that for $p\neq 2$, the relation $\perp_p$ fails to be linear in its first argument, and so $q_{n,p}[f]f$ is not necessarily orthogonal to $zf$.
\end{Example}

\begin{Example}
For $f, g \in H^2$, an exercise shows that if $\|f\|_2  \leq  \|g\|_2$, then $\|1 + zf\|_2  \leq \|1 + zg\|_2$. Might a similar statement hold for $p \neq 2$? 
The following example shows that the answer is \textit{no}.

Let $p = 4$ and choose
\begin{align*}
   f(z) &= (0.9)(1+ z + z^2) \\
   g(z) &= -1-z-z^2.
\end{align*}

It is immediate that $\|f\|_4 < \|g\|_4$.
However, numerically, we find
\begin{align*}
     \|1 + zf(z)\|_4^4 &\approx  31.9339\\
     \|1 + zg(z)\|_4^4  &\approx 20.0000.
\end{align*}
\end{Example}

With these examples in hand, it may now be reasonable to suspect that OPAs have a dependence on $p$ which is  highly non-linear. In general, this is true. Let us demonstrate this with what we describe as the OPA ``error"-- the quantity $\|q_{n,p}[f]f - 1\|_p$. We use this as motivation in Section \ref{cont}, where we study the $p$-dependence of OPAs. 

\begin{Example}
For $c>0$ and $m$ a positive integer, consider $f(z) = 1 + cz^m$.  Let us show that when $c=2$, we have
\[
      \|q_{0,2}[f]f-1\|_2  \neq  \|q_{0,4}[f]f-1\|_4.
\]
For $p=2$, 
\begin{align*}
     \|af-1\|_2^2 &=  \int_0^{2\pi} \left([a-1]^2 + ace^{im\theta}\right) \left([a-1]^2 + ace^{-im\theta}\right)\,\frac{d\theta}{2\pi} \\
     &= (a-1)^2 + a^2c^2.
\end{align*}
Notice that the result of the integration is the extraction of the constant OPA. Minimizing this expression (by differentiating with respect to $a$), we find
\[
a = \frac{1}{1+c^2}.
\]
This yields
\begin{align*}
    \|af-1\|_2^2 &= \left(\frac{1}{1+c^2} - 1 \right)^2 + \frac{c^2}{(1 + c^2)^2} \\
      &=  \frac{c^2}{1 + c^2}.
\end{align*}

This is equal to $4/5$ when $c=2$, so we obtain
\[
     \|af-1\|_2  =  \sqrt{4/5} \approx 0.894427.
\]

For $p=4$, we have
\[
     \| af - 1 \|_4^4 = \int_0^{2\pi} \left(a+ac e^{im\theta} - 1\right)^2 \left(a+ace^{-im\theta}-1\right)^2\,\frac{d\theta}{2\pi},
\]
and one may extract the constant term as
\[
     (a-1)^4 + 4a^2c^2(a-1)^2 +a^4c^4.
\]

Next, setting $c=2$, one may numerically find that $a \approx 0.121991$ minimizes the above expression.  

Finally, this yields
\[
     \|q_{0,4}[f]f-1\|_4 \approx  \sqrt[4]{0.781388} \approx 0.940192.
\]
\end{Example}

In addition to the error, one may also notice that the OPAs themselves vary with $p$.
For example, letting $f(z) = 1 + 2z + z^8$, one may numerically find that 
\begin{align*}
      q_{0,4}[f] &\approx  0.0970262\\
      q_{0,6}[f] &\approx  0.0674066.
\end{align*}

However, this is not always the case(!). 
The following example is a generalization of \cite[Example~6.1]{Cent}, which showed that the constant OPAs for $f(z) = 1 - z$ do not vary with $p$. 

\begin{Example}
Let $1<p<\infty$ and let $f \in H^p$. Let $\lambda \in \mathbb{C}$ and let $h = 1 + f$.
Suppose that $|f(e^{it})| = 1$ a.e. on $\mathbb{T}$ and $\overline{f(e^{-it})} = f(e^{it})$ (i.e., the Fourier coefficients of $f$ are real). 

Putting $a = q_{0,p}[h]$, we observe:
\begin{align*}
\| a(1 + f(e^{it})) - 1 \|_p &= \| a f(e^{it}) +  a - 1 \|_p \\
&= \| a +  \overline{f(e^{it})} (a - 1) \|_p \ \ \ \text{(multiply by $\overline{f}$, inner)}\\
&= \| a +  f(e^{-it}) (a - 1) \|_p \ \ \ \text{(real coefficients)}\\
&= \| a +  f(e^{it}) (a - 1) \|_p \ \ \ \text{($t \mapsto -t$)}\\
&= \| - a + 1 - 1 -  f(e^{it}) (a - 1) \|_p \ \ \ \text{(multiply by -1 and add 0)}\\ 
&= \|(1 - a)(1+ f(e^{it})) -1 \|_p.
\end{align*}
This tells us that $a = \frac12$, which is independent of $p$.
Note that if $f$ is any Blaschke product with real zeros, the hypotheses above are satisfied.

\end{Example}


\section{Limits and Continuity}\label{cont}

In this section, we provide results which relate to  varying the parameters in the OPA problem (i.e., the degree $n$, the value of $p$, and the function $f$). We first deal with this directly. Then, as a corollary, the first subsection below discusses the possible set of roots for OPAs. In the final subsection, we show that OPAs (in certain cases) are solutions to a fixed-point problem. 
All of these results enable us to make estimates concerning OPAs, knowing that exact computation is difficult when $p\neq 2$.

We begin by recording, without proof, a known result about metric projections.

\begin{Proposition}
Let $1<p<\infty$.  Let $f \in H^p$ with $f(0) \neq 0$ and let $h$ be the metric projection of 1 onto $[f]_p$. Then, in norm, 
\[
q_{n,p}[f] f \longrightarrow h \ \text{ as } \ n \longrightarrow \infty.
\]
\end{Proposition}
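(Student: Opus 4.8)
The plan is to exploit the nested structure of the subspaces together with the uniform convexity of $H^p$ (valid for $1<p<\infty$, as already invoked in the Introduction). Write $M_n := f\Pol_n$ and $M := [f]_p$, so that the $M_n$ are closed, $M_n \subseteq M_{n+1}$, and $M = \overline{\bigcup_n M_n}$ by definition. Set $u_n := 1 - q_{n,p}[f]f$ and $u := 1 - h$, and let $d_n := \|u_n\|_p = \mathrm{dist}(1, M_n)$ and $d := \|u\|_p = \mathrm{dist}(1, M)$. Since the $M_n$ increase to $M$, the sequence $d_n$ is non-increasing and bounded below by $d$. Note $q_{n,p}[f]f \in M_n \subseteq M$ and $h \in M$, a fact I will use below.

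First I would show $d_n \to d$. Given $\varepsilon > 0$, since $h \in M = \overline{\bigcup_n M_n}$, I can choose $g \in M_N$ for some $N$ with $\|h - g\|_p < \varepsilon$; then $d_N \le \|1 - g\|_p \le \|1-h\|_p + \varepsilon = d + \varepsilon$, and monotonicity gives $d_n \le d + \varepsilon$ for all $n \ge N$. Hence $d_n \downarrow d$.

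Next comes the core convexity step. Because both $q_{n,p}[f]f$ and $h$ lie in the subspace $M$, their average does too, so
\[
\left\| \frac{u_n + u}{2} \right\|_p = \left\| 1 - \frac{q_{n,p}[f]f + h}{2} \right\|_p \ge d.
\]
On the other hand, the triangle inequality gives $\|(u_n+u)/2\|_p \le (d_n + d)/2 \to d$. Combining the two bounds forces $\|(u_n+u)/2\|_p \to d$. In the trivial case $d = 0$ we have $h = 1$ and $\|u_n\|_p = d_n \to 0$, giving $q_{n,p}[f]f \to 1 = h$ directly. When $d > 0$, I would normalize: put $x_n := u_n/d_n$ and $y_n := u/d_n$, so that $\|x_n\|_p = 1$, $\|y_n\|_p = d/d_n \le 1$ with $\|y_n\|_p \to 1$, and $\|(x_n + y_n)/2\|_p \to 1$. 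The sequential form of uniform convexity then yields $\|x_n - y_n\|_p \to 0$, i.e. $\|u_n - u\|_p = d_n\|x_n - y_n\|_p \to 0$. Since $u_n - u = h - q_{n,p}[f]f$, this is precisely $q_{n,p}[f]f \to h$ in norm.

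The main obstacle is this last step: one must feed correctly normalized vectors into the uniform convexity argument, the delicate point being that $\|u_n\|_p$ only \emph{converges} to $d$ rather than equalling it. I expect the cleanest route is to record beforehand, as a small lemma, the sequential form of uniform convexity, namely that if $\|x_n\|_p \le 1$, $\|y_n\|_p \le 1$, and $\|(x_n+y_n)/2\|_p \to 1$, then $\|x_n - y_n\|_p \to 0$; this is proved by passing to the normalized vectors $x_n/\|x_n\|_p$, $y_n/\|y_n\|_p$ and appealing to the $\varepsilon$–$\delta$ definition of uniform convexity. Once that is in hand, everything else is bookkeeping with the monotone distances $d_n$.
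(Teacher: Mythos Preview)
Your argument is correct. The monotone-distance step, the midpoint lower bound $\|(u_n+u)/2\|_p \ge d$, and the sequential form of uniform convexity combine exactly as you say; the normalization by $d_n$ (rather than $d$) is handled cleanly, and the case $d=0$ is disposed of separately.

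As for comparison with the paper: there is essentially nothing to compare against. The paper records this proposition \emph{without proof}, remarking only that it follows from the general fact that the metric projections onto $f\Pol_n$ converge in the strong operator topology to the metric projection onto $[f]_p$, with a citation to \cite[Proposition~4.8.3]{CMR}. Your proof is a direct, self-contained verification via uniform convexity---in effect, the standard proof of that cited SOT-convergence result, specialized to the single vector $1$. So you are supplying more detail than the paper does, not taking a different route.
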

This result can be seen as a consequence of the fact that as $n \longrightarrow \infty$, the metric projections from $H^p$ onto $f\Pol_n$ converge, in the strong operator topology, to the metric projection from $H^p$ onto $[f]_p$ (see, e.g., \cite[Proposition~4.8.3]{CMR}).

In the following proposition, for $1<p<\infty$ and $f \in H^p$, we write $Q_{\infty} f$ for the metric projection of $1$ onto $[f]_p$, understanding that $Q_{\infty}$ need not be a bonafide $H^p$ function. The next result tells us something about the error incurred by approximating $q_{n,p}[f]$ using the Taylor polynomials of $Q_{\infty}$, when the (rather strict) assumption of norm convergence holds.

\begin{Proposition}\label{trunk}
     Let $1<p<\infty$, and $f \in H^p$.  Suppose that the representation
     \[
       Q_{\infty}(z) f(z)   = \sum_{k=0}^{\infty} \alpha_k z^k f(z) 
\]
converges in norm.  Then there exist a positive constant $C$ and an index $N$ such that
\[
       \| q_{n,p}[f] f - Q_{(n)}f\|_p^r  \leq C \| Q_{\infty} f - Q_{(n)} f \|_p
\]
for all $n \geq N$, where $Q_{(n)}(z) = \sum_{k=0}^{n} \alpha_k z^k$, and $r$ and $K$ are the applicable Pythagorean parameters.
\end{Proposition}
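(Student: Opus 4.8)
The plan is to set $x := 1 - q_{n,p}[f]f$ and $y := q_{n,p}[f]f - Q_{(n)}f$, so that the quantity we wish to control is exactly $y$, while their sum telescopes to $x + y = 1 - Q_{(n)}f$. The first observation is that $x \perp_p y$. Indeed, by the defining extremal property of the OPA we have $1 - q_{n,p}[f]f \perp_p f\Pol_n$, and since both $Q_{(n)}$ and $q_{n,p}[f]$ lie in $\Pol_n$, the difference $y = (q_{n,p}[f] - Q_{(n)})f$ belongs to $f\Pol_n$; linearity of $\perp_p$ in its second argument, recorded in \eqref{ppsduwebxzrweq5}, then yields the orthogonality.

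With orthogonality in hand, I would invoke the appropriate half of Theorem \ref{pythagthm}, chosen so as to place $\|x+y\|_p$ on the \emph{larger} side of the inequality and thereby isolate $\|y\|_p$ from above. For $p \in [2,\infty)$ the bound $\|x+y\|_p^p \ge \|x\|_p^p + \tfrac{1}{2^{p-1}-1}\|y\|_p^p$ gives, with $r = p$ and $K = \tfrac{1}{2^{p-1}-1}$, the estimate $\|y\|_p^r \le K^{-1}\big(\|x+y\|_p^r - \|x\|_p^r\big)$; for $p \in (1,2]$ the bound $\|x+y\|_p^2 \ge \|x\|_p^2 + (p-1)\|y\|_p^2$ gives the same shape with $r = 2$ and $K = p-1$. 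In either regime it remains to estimate $\|x+y\|_p^r - \|x\|_p^r$.

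The two ingredients here are: (i) $\|x+y\|_p = \|1 - Q_{(n)}f\|_p \le \|1 - Q_\infty f\|_p + \|Q_\infty f - Q_{(n)}f\|_p =: \delta + t$, by the triangle inequality; and (ii) $\|x\|_p = \|1 - q_{n,p}[f]f\|_p \ge \|1 - Q_\infty f\|_p = \delta$, since $q_{n,p}[f]f \in [f]_p$ while $Q_\infty f$ realizes the metric projection of $1$ onto the \emph{larger} set $[f]_p \supseteq f\Pol_n$. Combining these, $\|x+y\|_p^r - \|x\|_p^r \le (\delta+t)^r - \delta^r$. The hypothesis that $\sum_k \alpha_k z^k f$ converges in norm says precisely that $t = \|Q_\infty f - Q_{(n)}f\|_p \to 0$, so I may fix $t_0 > 0$ (say $t_0 = 1$) and choose $N$ with $t \le t_0$ for all $n \ge N$. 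A mean value theorem estimate for $s \mapsto s^r$ (valid since $r \ge 1$, so the derivative is nondecreasing) then gives $(\delta+t)^r - \delta^r \le r(\delta+t_0)^{r-1}t$, whence $\|y\|_p^r \le C\,\|Q_\infty f - Q_{(n)}f\|_p$ with $C = K^{-1}\,r\,(\delta+t_0)^{r-1}$.

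I do not expect a genuinely hard step; the argument is a clean assembly of Birkhoff--James orthogonality, the Pythagorean inequality, and a one-variable convexity estimate. The point demanding the most care is bookkeeping the direction of each inequality, so that $\|y\|_p$ is bounded from above while $\|x\|_p$ is bounded from below by $\delta$; in particular one must use that $Q_\infty$ minimizes over $[f]_p$, not merely over $f\Pol_n$. A minor case to check separately is the degenerate situation $\delta = 0$ (i.e. $1 \in [f]_p$), where $(\delta+t)^r - \delta^r = t^r$; but since $r \ge 1$ and $t \le t_0$, one still has $t^r \le t_0^{r-1}t$, so the stated bound persists, and in fact the mean value estimate already subsumes this case.
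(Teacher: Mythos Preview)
Your proof is correct and follows essentially the same route as the paper: establish $1 - q_{n,p}[f]f \perp_p q_{n,p}[f]f - Q_{(n)}f$, apply the lower Pythagorean inequality to isolate $\|q_{n,p}[f]f - Q_{(n)}f\|_p^r$, bound $\|1 - q_{n,p}[f]f\|_p$ below by $\|1 - Q_\infty f\|_p$ via minimality of the projection onto $[f]_p$, and finish with the mean-value estimate $a^r - b^r \le r a^{r-1}(a-b)$. The only cosmetic difference is your choice of $t_0 = 1$ versus the paper's implicit $t_0 = \|1 - Q_\infty f\|_p$, which yields a different (but equally valid) explicit constant.
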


\begin{proof}
From the orthogonality relation
\[
        1 -  q_{n,p}[f] f  \  \perp_p \  q_{n,p}[f]f - Q_{(n)} f,
\]
the Pythagorean inequality gives
\[
        \| 1 -  q_{n,p}[f] f  \|_p^r + K \| q_{n,p}[f] f - Q_{(n)} f \|_p^r  \leq  \| 1 - Q_{(n)} f\|_p^r.
\]

Rearrange and estimate to get
\begin{align*}
     K \| q_{n,p}[f] f - Q_{(n)} f \|_p^r  &\leq    \| 1 - Q_{(n)} f\|_p^r -  \| 1 -  q_{n,p}[f] f  \|_p^r \\
     &\leq    \| 1 - Q_{(n)} f\|_p^r -  \| 1 -  Q_{\infty} f  \|_p^r \\
     &\leq   r \| 1 - Q_{(n)} f\|_p^{r-1} \left( \| 1 - Q_{(n)} f\|_p - \| 1 -  Q_{\infty} f  \|_p  \right) \\
     &\leq   r \| 1 - Q_{(n)} f\|_p^{r-1}  \|  Q_{\infty}f -  Q_{(n)} f  \|_p  \\
     &\leq   2r \| 1 - Q_{\infty} f\|_p^{r-1}  \| Q_{\infty}f -  Q_{(n)}f  \|_p, 
\end{align*}
for $n$ sufficiently large.  In the third step we applied the elementary inequality 
\[
     a^r - b^r \leq ra^{r-1}(a-b),
\] 
for $0 < b < a$ and $r>1$

This verifies the claim, with $C = 2r \| 1 - Q_{\infty} f\|_p^{r-1}/K$.
\end{proof}

The previous proposition can be applied when $f$ is any polynomial; we record that result now. 

\begin{Proposition}
Suppose that $1<p<\infty$.
Let $z_1$, $z_2$,\ldots, $z_N$ be a sequence of nonzero points of $\mathbb{D}$, and define
\[
      f(z) :=  \left( 1 - \frac{z}{z_1} \right) \left( 1 - \frac{z}{z_2}   \right)\cdots \left( 1 - \frac{z}{z_N}   \right).
\]
Set $r=2$ if $1<p\leq 2$, and set $r=p$ if $2< p<\infty$.
Then the metric projection $h$ of the unit constant function 1 onto the subspace $[f]_p$ of $H^p$ has a norm convergent representation
\[
     h(z) =  \sum_{k=0}^{\infty} b_k z^k f(z),
\]
and there exists a positive constant $C$ such that
\begin{equation}\label{erresttail}
         \left\|  q_{n,p}[f]f  -  \sum_{k=0}^{n} b_k z^k f\right\|_p^r  \leq  C \left\|  \sum_{k=n+1}^{\infty} b_k z^k f  \right\|_p
\end{equation}
for all positive integers $n$.
\end{Proposition}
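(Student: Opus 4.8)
The plan is to deduce this from Proposition \ref{trunk}: all I really need is to produce, for the metric projection $h$ of $1$ onto $[f]_p$, the norm-convergent expansion $h = \sum_{k=0}^\infty b_k z^k f$ demanded as the hypothesis of that proposition. The one substantive point — and the step I expect to be the main obstacle — is to show that the quotient $g := h/f$ is again a genuine element of $H^p$; once that is known, the coefficients $b_k := \widehat{g}(k)$ furnish the expansion and everything else is bookkeeping.

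To establish $g \in H^p$ I would exploit that $f$ is a polynomial with all its zeros strictly inside $\mathbb{D}$. Factor $f = B\cdot O$, where $B$ is the finite Blaschke product whose zeros (with multiplicity) are $z_1,\dots,z_N$ and $O = f/B$ is outer. Because $f$ has no zeros on $\mathbb{T}$, its modulus is bounded below there, so $|O| = |f|$ is bounded below on $\mathbb{T}$; the Poisson representation of $\log|O|$ then forces $|O(z)| \ge \delta > 0$ throughout $\mathbb{D}$, whence $1/O \in H^\infty$. Next, since point evaluations and their derivatives are bounded functionals on $H^p$, every element of $\bigvee\{f, zf, z^2 f,\dots\}$ vanishes at each $z_j$ to at least the multiplicity of $z_j$ as a zero of $B$, and this property passes to the $H^p$-limit $h$. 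Consequently $\psi := h/B \in H^p$ (division by an inner factor dividing $h$ preserves $H^p$), and
\[
   g = \frac{h}{f} = \frac{h}{BO} = \frac{\psi}{O} = \psi\cdot\frac{1}{O} \in H^p,
\]
because $1/O \in H^\infty$.

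With $g \in H^p$ secured, set $g_n := \sum_{k=0}^n b_k z^k$. For $1 < p < \infty$ the Riesz projection is bounded on $L^p$, so the Taylor partial sums converge, $g_n \to g$ in $H^p$; multiplying by the bounded polynomial $f$ preserves this, giving $g_n f \to g f = h$ in $H^p$. This is exactly the norm-convergent representation $h = \sum_{k=0}^\infty b_k z^k f$, and it verifies the hypothesis of Proposition \ref{trunk} with $Q_\infty = g$, $\alpha_k = b_k$, and $Q_{(n)} = g_n$. That proposition then produces a constant $C$ and an index $N$ for which \eqref{erresttail} holds for all $n \geq N$, with the indicated Pythagorean exponent $r$ (namely $r = 2$ for $1 < p \le 2$ and $r = p$ for $2 < p < \infty$).

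The only loose end is that Proposition \ref{trunk} delivers the bound merely for $n \geq N$, while \eqref{erresttail} is asserted for every positive integer. For the finitely many indices $n < N$ I would simply enlarge $C$: whenever the right-hand side $\|\sum_{k=n+1}^\infty b_k z^k f\|_p$ vanishes, one has $h = g_n f \in f\Pol_n$, which forces $q_{n,p}[f] = g_n$ and hence a vanishing left-hand side as well; in every remaining case the right-hand side is strictly positive, so the finitely many resulting ratios can be absorbed into a single larger constant, yielding \eqref{erresttail} for all $n$.
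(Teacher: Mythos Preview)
Your argument is correct and follows essentially the same route the paper sketches: show that the metric projection $h$ vanishes at the zeros $z_1,\dots,z_N$ of $f$, deduce that $g=h/f\in H^p$, obtain the norm-convergent expansion $h=\sum b_k z^k f$ from convergence of Taylor partial sums of $g$, and then invoke Proposition~\ref{trunk}. The one difference is in how you pass from ``$h$ vanishes at the $z_j$'' to ``$h/f\in H^p$'': the paper points to boundedness of the difference-quotient operator $B_w$ on $H^p$ (iterated over the zeros), while you instead use the inner--outer factorization $f=BO$ together with $1/O\in H^\infty$. Both are standard and equally short; the difference-quotient route avoids the detour through outer functions, whereas your route makes the role of the boundary behavior of $|f|$ more transparent. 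Your explicit treatment of the finitely many indices $n<N$ (and the degenerate case where the tail vanishes) is a detail the paper leaves implicit, and your handling of it is fine.
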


We omit the proof here, but note that the metric projection $h$ must vanish at the zeros of $f$; in turn, boundedness of the difference-quotient operator, given by
\[
\left(B_wf\right)(z) = \frac{f(z) - f(w)}{z - w}, \ \ \ z, w\in \mathbb{D},
\]
(applied where $f(w) = 0$) then ensures the norm convergent representation.


\subsection{Continuity}

As discussed earlier, OPAs generally vary with $p$. We discuss this variance here, first showing that when $f$ is a bounded function, $q_{n,p}[f]$ varies continuously with respect to $p$.

\begin{Lemma}
Let $f \in H^{\infty}$ with $f(0) \neq 0$ and let $d \in \mathbb{N}$. If $(p_k)_k \subseteq (1, \infty)$ with $p_k \longrightarrow p \in (1, \infty)$, then $q_{d,p_k}[f]$ converges to $q_{d,p}[f]$ uniformly as $k \longrightarrow\infty$.
\end{Lemma}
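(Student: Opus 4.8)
The plan is to combine a compactness argument with the uniqueness of the minimizer. Fix the degree $d$ and note that since $\Pol_d$ is finite dimensional, the claimed uniform convergence of $q_{d,p_k}[f]$ to $q_{d,p}[f]$ on $\overline{\mathbb{D}}$ is equivalent to convergence of their coefficients; it thus suffices to prove the latter. Because $p_k \longrightarrow p \in (1,\infty)$, we may discard finitely many terms and assume $p_k \in [a,b]$ for some fixed $1 < a \le b < \infty$. Throughout I would use freely that $(\mathbb{T}, dm)$ is a probability space, so that $\|h\|_s \le \|h\|_t$ whenever $s \le t$, and $\|h\|_s \le \|h\|_\infty$ for every $s$.

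First I would establish a uniform bound. Comparing the minimum to the competitor $q = 0$ gives
\[
\|q_{d,p_k}[f]f - 1\|_{p_k} \le \|1\|_{p_k} = 1,
\]
whence $\|q_{d,p_k}[f]f\|_{p_k} \le 2$ by the triangle inequality. Since $p_k \ge a$, nesting yields $\|q_{d,p_k}[f]f\|_a \le 2$. As $f(0)\neq 0$, the function $f$ does not vanish identically, so $q \mapsto \|qf\|_a$ is a genuine norm on the finite-dimensional space $\Pol_d$, hence comparable to any fixed reference norm; it follows that the coefficients of $q_{d,p_k}[f]$ are bounded uniformly in $k$. Thus $(q_{d,p_k}[f])_k$ lies in a compact subset of $\Pol_d$.

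Next I would identify every subsequential limit. Suppose $q_{d,p_{k_j}}[f] \longrightarrow \tilde q$ coefficient-wise. For any \emph{bounded} $g$ one has $\|g\|_s \longrightarrow \|g\|_p$ as $s \longrightarrow p$, by dominated convergence applied to $|g|^s$ (dominated by the constant $\max(1,\|g\|_\infty)^b$). Since $f \in H^\infty$ and the polynomials have degree at most $d$, the functions $q_{d,p_{k_j}}[f]f - 1$ converge to $\tilde q f - 1$ in $L^\infty$, and the sandwich $\|\cdot\|_{p_{k_j}} \le \|\cdot\|_\infty$ lets me pass the varying exponent through the difference. Combining these two observations, both sides of the minimality inequality
\[
\|q_{d,p_{k_j}}[f]f - 1\|_{p_{k_j}} \le \|q f - 1\|_{p_{k_j}}, \qquad q \in \Pol_d,
\]
converge to their counterparts at exponent $p$, giving $\|\tilde q f - 1\|_p \le \|qf - 1\|_p$ for all $q \in \Pol_d$. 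Thus $\tilde q$ is a minimizer at exponent $p$, and by uniqueness of the OPA, $\tilde q = q_{d,p}[f]$.

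Finally, since the bounded sequence $(q_{d,p_k}[f])_k$ has all of its subsequential limits equal to $q_{d,p}[f]$, the full sequence converges to $q_{d,p}[f]$, and coefficient convergence of degree-$\le d$ polynomials gives uniform convergence on $\overline{\mathbb{D}}$. The main obstacle is the presence of two simultaneous limits—the function (through its coefficients) and the exponent $p_k$; the key is to decouple them using the $L^\infty$ control afforded by $f \in H^\infty$ together with the probability-space inequalities, while the conceptual engine that upgrades subsequential convergence to full convergence is the uniqueness of the OPA.
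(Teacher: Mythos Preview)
Your argument is correct and complete; the overall compactness-plus-uniqueness skeleton matches the paper, but the two implementations differ in both substantive steps.

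For the uniform bound on the coefficients, the paper extracts the Fourier coefficients of $q_{d,p_k}[f]f-1$ one at a time and solves a triangular system inductively to bound each $a_j^{(k)}$ in terms of $|f_0|^{-1}$ and the earlier coefficients. Your approach---observing that $q\mapsto\|qf\|_a$ is a genuine norm on the finite-dimensional space $\Pol_d$ (here is where $f(0)\neq 0$ enters, via $f\not\equiv 0$ on $\mathbb{T}$) and invoking equivalence of norms---is shorter and avoids tracking Taylor coefficients of $f$ at all.

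For identifying the subsequential limit, the paper passes to the limit in the Birkhoff--James orthogonality equations $\int_{\mathbb{T}}[q_{d,p_k}[f]f-1]^{\langle p_k-1\rangle}z^jf\,dm=0$, which requires controlling the map $(\phi,s)\mapsto \phi^{\langle s-1\rangle}$. You instead pass to the limit directly in the minimality inequality $\|q_{d,p_{k_j}}[f]f-1\|_{p_{k_j}}\le\|qf-1\|_{p_{k_j}}$, using $L^\infty$ convergence of the integrands (afforded by $f\in H^\infty$) together with dominated convergence in the exponent. This sidesteps the $\langle\cdot\rangle$-power machinery entirely and is more elementary; the paper's route, on the other hand, makes explicit which structural equations persist in the limit, in keeping with the orthogonality viewpoint used elsewhere in the article.
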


\begin{proof}
    Let us write
    \begin{align*}
        f(z) &= \sum_{j=0}^{\infty} f_j z^j, \\
        q_{d,p}[f](z) &= a_0 + a_1 z + a_2 z^2 +\cdots+ a_d z^d, \\
         q_{d,p_k}[f](z) &= a_0^{(k)} + a_1^{(k)}  z + a_2^{(k)}  z^2 +\cdots+ a_d^{(k)}  z^d.
    \end{align*}
      
Since $f \in H^{\infty}$, Cauchy-Schwarz yields
\[
\left| \int_{\mathbb{T}} fz^{-j}\,dm \right|
\leq \|f\|_{\infty}
\]
for all $j \ge 0$. 
Hence, all of the coefficients $f_k$ are bounded by $\|f\|_{\infty}$. 
    
Letting $p_k'$ be the dual exponent of $p_k$, we observe
\[
       |a_0^{(k)}f_0-1| =   \left|  \int_{\mathbb{T}} (q_{d,p_k}[f]f-1)\,dm \right|  \leq \|q_{d,p_k}[f]f-1\|_{p_k} \|1\|_{p_k'} \leq 1,
\]     
and so the sequence $\{a_0^{(k)}\}$ is bounded.

Further, since
\[
    \left| a_0^{(k)} f_j + a_1^{(k)} f_{j-1} +\cdots + a_j^{(k)} f_0\right| \leq \left|  \int_{\mathbb{T}} (q_{d,p_k}[f]f-1)z^{-j}\,dm  \right| \leq 1,
\]    
it follows
\[
    |a_j^{(k)}| \leq  \frac{ \left| a_0^{(k)} f_j + a_1^{(k)} f_{j-1} +\cdots + a_{j-1}^{(k)} f_1 \right|+1 }{ |f_0| },
\]    
for all $k\in \mathbb{N}$ and $1\leq j \leq d$. That is, $\{a_j^{(k)}\}_{k=1}^{\infty}$ is also a bounded sequence for $1 \leq j \leq d$.  By passing to a subsequence and relabeling, we can assume that $\{q_{n,p_k}[f]\}_{k=1}^{\infty}$ is a uniformly convergent sequence of polynomials, which converges to some polynomial, say, $A(z) = a_0 + a_1 z+\cdots+a_d z^d  \in \Pol_d$.

Now, for $0 \leq j \leq d$, recall the orthogonality equations
\[
       \int_{\mathbb{T}} [q_{d,p_k}[f]f -1]^{\langle p_k -1\rangle} z^j f\, dm = 0.
\]
Taking $k\longrightarrow\infty$ and invoking uniform convergence, we find that  
\[
       \int_{\mathbb{T}} [Af -1]^{\langle p_k -1\rangle} z^j f\, dm = 0,
\]
for $0 \leq j \leq d$ (the taking of $\langle p_k-1\rangle$ powers also being well behaved).  By uniqueness of the optimal polynomial, it must be that
\[
       A(z) = q_{d,p}[f](z).
\]
    
Since every subsequence of the originally given sequence  $\{q_{d,p_k}[f]\}_{k=1}^{\infty}$ has a further subsequence that converges to the same limit $q_{d,p}[f]$, it must be that
\[
        q_{d,p_k}[f] \longrightarrow q_{d,p}[f]
\]
uniformly.  
\end{proof}

We now present another continuity result-- continuity in $f$. In particular, if $f_k \longrightarrow f$ in $H^p$, then $q_{n, p}[f_k] \longrightarrow q_{n, p}[f]$. 
Before establishing this result, we need a couple of lemmas.

\begin{Lemma}
     Let $1<p<\infty$ and $1/p + 1/q = 1$.  If $\phi_k \longrightarrow \phi$ in $L^p$, then $\phi_k^{\langle p-1\rangle}\longrightarrow \phi^{\langle p-1\rangle}$ in $L^q$.
\end{Lemma}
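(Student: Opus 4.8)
The plan is to recognize this as a continuity statement for the nonlinear superposition (Nemytskii) operator $N(\phi) := |\phi|^{p-2}\overline{\phi} = \phi^{\langle p-1\rangle}$, viewed as a map from $L^p$ into $L^q$, and to prove it by the standard subsequence-extraction argument paired with dominated convergence. First I would record two preliminary facts. The pointwise map $w \mapsto |w|^{p-2}\overline{w}$ on $\C$ is continuous everywhere, including at the origin under the stated convention, because its modulus is $|w|^{p-1}$ and $p-1 > 0$. Moreover, since $(p-1)q = p$, we have $|N(\phi)|^q = |\phi|^p$ pointwise, so $N$ genuinely sends $L^p$ into $L^q$, with the a priori identity $\|N(\phi)\|_q^q = \|\phi\|_p^p$.

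Because $L^q$ is a metric space, to prove $N(\phi_k) \to N(\phi)$ it suffices to show that every subsequence of $(\phi_k)$ admits a further subsequence along which $N(\phi_{k_j}) \to N(\phi)$ in $L^q$. So I would fix an arbitrary subsequence. Since $\phi_k \to \phi$ in $L^p$, I pass to a further subsequence (relabeled $\phi_j$) for which $\phi_j \to \phi$ almost everywhere and for which there exists a dominating $g \in L^p$ with $|\phi_j| \leq g$ a.e. for every $j$. This is the classical consequence of $L^p$ convergence: choose the subsequence so that $\sum_j \|\phi_{j+1} - \phi_j\|_p < \infty$ and set $g = |\phi_1| + \sum_j |\phi_{j+1} - \phi_j|$, which lies in $L^p$ by the triangle inequality and dominates each $|\phi_j|$ and $|\phi|$.

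With almost-everywhere convergence and domination in hand, the pointwise continuity of $N$ gives $N(\phi_j) \to N(\phi)$ almost everywhere. For the $L^q$ convergence I would estimate the integrand using the convexity inequality $|a-b|^q \leq 2^{q-1}(|a|^q + |b|^q)$, valid since $q > 1$, to obtain
\[
|N(\phi_j) - N(\phi)|^q \leq 2^{q-1}\big(|\phi_j|^p + |\phi|^p\big) \leq 2^{q-1}\big(g^p + |\phi|^p\big),
\]
where the right-hand side is a fixed $L^1(\mathbb{T})$ function. The dominated convergence theorem then yields $\int_{\mathbb{T}} |N(\phi_j) - N(\phi)|^q\,dm \to 0$, which is exactly $L^q$ convergence along this subsequence, completing the subsequence argument and hence the proof.

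The only genuinely delicate point is the extraction of the dominating function $g$; once that is secured, the rest is a routine application of pointwise continuity of $N$ and of dominated convergence. I note that one could instead bypass domination entirely by invoking Vitali's convergence theorem, checking uniform integrability of $\{|\phi_j|^p\}$ together with convergence in measure, but the subsequence-plus-domination route is the most self-contained and keeps the argument elementary.
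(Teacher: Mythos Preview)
Your argument is correct. The overall shape matches the paper's proof: both bound $|\phi_k^{\langle p-1\rangle} - \phi^{\langle p-1\rangle}|^q$ by $2^{q-1}(|\phi_k|^p + |\phi|^p)$ and then pass to the limit via a dominated-convergence principle. The difference lies in how almost-everywhere convergence is secured. The paper applies the \emph{generalized} dominated convergence theorem (with the variable majorants $2^{q-1}(|\phi_k|^p + |\phi|^p)$) and cites the Carleson--Hunt theorem to obtain pointwise convergence. You instead run the standard subsequence trick: pass to a sub-subsequence along which $\phi_j \to \phi$ a.e.\ with a fixed $L^p$ dominator $g$, then apply ordinary dominated convergence. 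Your route is more elementary and self-contained; it avoids both the generalized DCT and any appeal to deep Fourier-analytic machinery, and it sidesteps the delicate point that $L^p$ convergence alone does not give a.e.\ convergence for the full sequence. The paper's route, by contrast, handles the full sequence in one shot but leans on heavier tools.
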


\begin{proof}
First, we check that
\[
      \int_{\mathbb{T}} \left|\phi^{\langle p-1\rangle}\right|^q\,dm =  \int_{\mathbb{T}} |\phi|^{(p-1)q}\,dm =  \int_{\mathbb{T}} |\phi|^p\,dm,
\]
and so $\phi^{\langle p-1\rangle} \in L^q$; similarly $\phi_k^{\langle p-1\rangle} \in L^q$.  

Next, we apply the generalized dominated convergence theorem, using the sequential bound
\[
    \left| \phi_k^{\langle p-1\rangle} - \phi^{\langle p-1\rangle} \right|^q  \leq  2^{q-1} \left(|\phi_k|^p + |\phi|^p\right) \ \mbox{a.e.-$dm$},
\]
with the Carleson-Hunt theorem supplying pointwise convergence almost everywhere.  The conclusion is
\[
      \int_{\mathbb{T}} \left|\phi_k^{\langle p-1\rangle} - \phi^{\langle p-1\rangle}\right|^q\,dm \longrightarrow 0,
\]
as claimed.
\end{proof}

Below, we use the standard notation $\hat{f}(n)$ to denote the $n$-th Fourier coefficient of a function $f \in f \in L^p$.  
\begin{Lemma}
     Let $1<p<\infty$.  If $\phi \in H^p$, then
     \[
          \|\phi\|_p^r \geq |\hat{\phi}(0)|^r + K |\hat{\phi}(1)|^r + K^2 |\hat{\phi}(2)|^r + \cdots,
     \]
     where $r$ and $K$ are the lower Pythagorean parameters.
\end{Lemma}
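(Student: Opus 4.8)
The plan is to peel off one Taylor coefficient at a time and apply the lower Pythagorean inequality of Theorem~\ref{pythagthm} at each stage. Recall that the lower Pythagorean parameters are $(r,K) = (2,\,p-1)$ when $1 < p \le 2$ and $(r,K) = (p,\, 1/(2^{p-1}-1))$ when $2 \le p < \infty$; in either case $K > 0$, which is precisely what makes the infinite sum meaningful.

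First I would set $\phi_0 = \phi$ and define recursively $\phi_{j+1}(z) = (\phi_j(z) - \widehat{\phi_j}(0))/z$. Each $\phi_j$ lies in $H^p$ (division by $z$ of a function vanishing at the origin preserves membership), and a short bookkeeping with the Taylor expansion shows $\widehat{\phi_j}(0) = \hat{\phi}(j)$ for every $j$. The crucial observation is that the constant $\widehat{\phi_j}(0)$ is $p$-orthogonal to the remainder $\phi_j - \widehat{\phi_j}(0) = z\,\phi_{j+1}$: indeed, $\widehat{\phi_j}(0)$ is the mean of $\phi_j$, so $\phi_j - \widehat{\phi_j}(0)$ has zero mean, and by James's criterion \eqref{ppsduwebxzrweq5} (the constant factor $|\widehat{\phi_j}(0)|^{p-2}\,\overline{\widehat{\phi_j}(0)}$ pulls out of the integral, while the case $\widehat{\phi_j}(0)=0$ is covered by the convention following \eqref{BJp}) we obtain
\[
\widehat{\phi_j}(0) \ \perp_p \ \big(\phi_j - \widehat{\phi_j}(0)\big).
\]

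With the orthogonality in hand, the lower Pythagorean inequality applied to $x = \widehat{\phi_j}(0)$ and $y = z\,\phi_{j+1}$, together with the fact that multiplication by $z$ is an isometry of $L^p(\T)$ (so $\|z\,\phi_{j+1}\|_p = \|\phi_{j+1}\|_p$), yields the recursion
\[
\|\phi_j\|_p^r \ \geq \ |\hat{\phi}(j)|^r + K\,\|\phi_{j+1}\|_p^r.
\]
Unrolling this $n+1$ times gives $\|\phi\|_p^r \geq \sum_{j=0}^{n} K^j |\hat{\phi}(j)|^r + K^{n+1}\|\phi_{n+1}\|_p^r$, and since the trailing term is nonnegative we may discard it and let $n \to \infty$ to recover the stated inequality.

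I do not anticipate a genuine obstacle: the content lies in choosing the right decomposition together with the two structural facts, namely that a constant is $p$-orthogonal to any mean-zero function and that the shift is an $L^p$ isometry. The only points deserving a word of care are the membership $\phi_{j+1} \in H^p$ and the observation that the discarded remainder $K^{n+1}\|\phi_{n+1}\|_p^r$ is genuinely nonnegative, which is exactly where the positivity $K > 0$ is used.
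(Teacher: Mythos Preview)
Your proof is correct and follows essentially the same approach as the paper: peel off one Taylor coefficient at a time and apply the lower Pythagorean inequality at each stage. The paper phrases the orthogonality as $z^k \perp_p z^m H^p$ for $m>k\ge 0$ and iterates directly, whereas you divide by $z$ after each step and use the equivalent fact that a constant is $p$-orthogonal to any mean-zero $H^p$ function; since $|z|=1$ on $\T$, these are the same argument.
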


    \begin{proof}
 This follows immediately from the orthogonality relations
     \[
             z^k \   \perp_p   \ z^m H^p\ \ \ \ \forall m > k \geq 0,
     \]
and repeated application of the lower Pythagorean inequality.
\end{proof}

We are now prepared to prove the aforementioned result. 

\begin{Theorem}\label{cont-f}
Suppose that $1<p<\infty$ and $n\in \mathbb{N}$.
  Let  $f_k  \in H^p$ and let $Q_k  := q_{n,p}[f_k]$ for each $k \in \mathbb{N}$.  If $f_k \longrightarrow f$ in $H^p$, and $f(0) \neq 0$, then $Q_k \longrightarrow Q := q_{n,p}[f]$.
\end{Theorem}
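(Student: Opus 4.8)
The plan is to argue by compactness inside the finite-dimensional space $\Pol_n$, mirroring the proof of the preceding $p$-continuity lemma: first bound the coefficients of the $Q_k$ uniformly, then extract a convergent subsequence, identify its limit with $Q$ through the orthogonality relations, and finally upgrade subsequential convergence to convergence of the whole sequence. Throughout I use that $f_k \to f$ in $H^p$ means convergence in $L^p$ of the boundary functions, so that $\sup_k \|f_k\|_p =: M < \infty$ and every Fourier coefficient satisfies $|\widehat{f_k}(j)| \le \|f_k\|_1 \le \|f_k\|_p \le M$; in particular $\widehat{f_k}(0) = f_k(0) \to f(0) \neq 0$, so $|\widehat{f_k}(0)| \ge |f(0)|/2$ for all large $k$.

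For the boundedness of coefficients I would take the competitor $q = 0$ in the minimization to get $\|Q_k f_k - 1\|_p \le \|1\|_p = 1$, whence $|\widehat{Q_k f_k - 1}(j)| \le \|Q_k f_k - 1\|_1 \le 1$ for every $j$. Writing $Q_k = \sum_{j=0}^n a_j^{(k)} z^j$ and reading off the Fourier coefficients of $Q_k f_k - 1$ exactly as in the earlier lemma gives $|a_0^{(k)}| \le 2/|\widehat{f_k}(0)|$ and then, inductively on $j$,
\[
|a_j^{(k)}| \le \frac{1 + |a_0^{(k)}\widehat{f_k}(j) + \cdots + a_{j-1}^{(k)}\widehat{f_k}(1)|}{|\widehat{f_k}(0)|},
\]
so that each coefficient sequence $\{a_j^{(k)}\}_k$, $0 \le j \le n$, is bounded for large $k$. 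By Bolzano--Weierstrass I pass to a subsequence along which $Q_k \to A$ for some $A \in \Pol_n$; since $\Pol_n$ is finite-dimensional this convergence is simultaneously uniform on $\overline{\mathbb{D}}$ and in $H^p$, and in particular $\sup_k \|Q_k\|_{\infty} < \infty$.

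The crux is passing to the limit in the orthogonality relations
\[
\int_{\mathbb{T}} [Q_k f_k - 1]^{\langle p-1\rangle} z^j f_k \, dm = 0, \qquad 0 \le j \le n.
\]
Here I would first show $Q_k f_k \to A f$ in $L^p$, splitting $Q_k f_k - Af = Q_k(f_k - f) + (Q_k - A)f$ and using the uniform bound on $\|Q_k\|_{\infty}$; the preceding lemma on the $\langle p-1\rangle$ operation then yields $[Q_k f_k - 1]^{\langle p-1\rangle} \to [Af - 1]^{\langle p-1\rangle}$ in $L^q$, while trivially $z^j f_k \to z^j f$ in $L^p$. Since one factor converges in $L^q$ and the other in $L^p$, Hölder's inequality makes the dual pairing jointly continuous, so the relations pass to the limit and give $\int_{\mathbb{T}} [Af - 1]^{\langle p-1\rangle} z^j f \, dm = 0$ for $0 \le j \le n$, that is, $1 - Af \perp_p \bigvee\{f, zf, \ldots, z^n f\}$. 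By uniqueness of the OPA this forces $A = q_{n,p}[f] = Q$.

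Finally, since every subsequence of $(Q_k)$ admits a further subsequence converging to the same limit $Q$, the full sequence converges to $Q$. I expect the main obstacle to be exactly this limiting step in the orthogonality equations: the dependence on the error is nonlinear through the $\langle p-1\rangle$ transform, so one must control the convergence of $f_k$ in $L^p$ and of the nonlinearly transformed error in $L^q$ simultaneously---which is precisely what the two preliminary lemmas were arranged to supply.
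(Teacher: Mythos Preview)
Your argument is correct and follows essentially the same strategy as the paper: bound the OPA coefficients uniformly, extract a convergent subsequence in the finite-dimensional space $\Pol_n$, pass to the limit in the orthogonality relations using the $\langle p-1\rangle$-continuity lemma together with joint continuity of the $L^p$--$L^q$ pairing, and then invoke uniqueness of the OPA plus the standard subsequence argument. Your Fourier-coefficient bound via $|\widehat{g}(j)|\le\|g\|_1\le\|g\|_p$ is a mild streamlining over the paper's route (which first treats $n=1$ separately and for general $n$ appeals to the Pythagorean lower bound on Fourier coefficients), but the two proofs are otherwise the same.
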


\begin{proof}  Let us first handle the case $n=1$, and write $Q_k(z) = a_k + b_k z$ for $q_{1,p}[f]$.

Since $f_k(0) \longrightarrow f(0)$, and $f(0) \neq 0$, there is no harm in assuming that there exists $c>0$  such that $|f_k(0)| \geq c$ for all $k$.

     From the relation 
     \[
            1 \   \perp_p  \  zH^p
     \]
     we see that
     \[
           1  \geq \|1 - Q_k f_k\|_p^r  \geq  |1 - a_k f_k(0)|^r  +  K \| Q_k f_k - a_k f_k(0) \|^r_p,
     \]
     where $r$ and $K$ are the lower Pythagorean parameters.  It follows that
     \[
          1  \geq |1 - a_k f_k(0) |,
     \]
     implying that
     \[
           |a_k|  \leq \frac{2}{c}
     \]
     for all $k$.  Thus $\{a_k\}$ is a bounded complex sequence, from which we can extract a convergent subsequence, which for now we relabel as the original sequence.
     
     Next, subharmonicity and the triangle inequality yield
     \[
             c|b_k|  \leq |b_k f_k(0)| \leq   \| b_k z f_k \|_p  \leq \|1 - (a_k+ b_k z)f_k \|_p +  \|1 - a_k f_k \|_p.
     \]
     The last expression on the right side is uniformly bounded as $k$ varies through $\mathbb{N}$, and hence $\{b_k\}$ is a bounded sequence.  Once again we may draw a convergent subsequence, and relabel it so that
     \[
            Q_k  =  a_k + b_k z
     \]
     converges uniformly to some $R(z) =  a + bz$.
     
     It needs to be shown that $R = Q := q_{1,p}[f]$.  For this we rely on the elementary result that if $v_k \longrightarrow v$ in a Banach space and $\lambda_k \longrightarrow \lambda$ in its dual space, then $\lambda_k(v_k) \longrightarrow \lambda(v)$.  
     
     We apply this, identifying
     \begin{align*}
           v_k &= f_k\\
           v  &= f\\
           \lambda_k(\cdot)   &=   \int_{\mathbb{T}} \left( 1 - [a_k + b_kz]f_k \right)^{\langle p-1 \rangle} (\cdot)\,dm\\
            \lambda(\cdot)   &=   \int_{\mathbb{T}} \left( 1 - [a + bz]f \right)^{\langle p-1 \rangle} (\cdot)\,dm.
     \end{align*}
     
     Then Lemma 0.49 ensures that $\lambda_k \longrightarrow \lambda$, as needed.
     
     The conclusion is that $\lambda(v) = \lim_{k\rightarrow\infty}\lambda_k(v_k) = \lim_{k\rightarrow\infty} 0=0$, or
     \[
          1 - (a+bz)f \  \perp_p  \ f.
     \]
     
     Repeat this argument with the choices
     \[
           v_k = z f_k \ \ \ \mbox{and} \ \ \ v = zf
     \]
     to see that 
      \[
          1 - (a+bz)f \   \perp_p  \ zf
     \]
     as well.   This forces, $R(z) = Q(z) = a + bz = q_{1,p}[f](z)$, as claimed.  
     
     So far, we only know that there is a subsequence that satisfies the claim.  However, we see that every subsequence of the original sequence $\{f_k\}$ has a further subsequence for which the linear OPAs tend {\it to the same limit} $a + bz$, the linear OPA from $f$ being unique.  This proves that in fact the full sequence $\{a_k + b_k z\}$ converges to $a + bz$.
     
     This verifies the claim when $n=1$.
     
     More generally, for arbitrary $n \in \mathbb{N}$, let
     \[
           Q_k(z) = q_{n,p}[f](z) = a_0^{(k)} + a_1^{(k)} z + \cdots + a_n^{(k)} z^n.
     \]
     
     From Lemma 0.50, we get
     \begin{align*}
          1 &\geq \left\|1 - Q_k f_k \right\|_p^r \\
             &\geq  \left|1 - a_0^{(k)} f_0\right|^r  +  \sum_{m=1}^{\infty}  K^{m} \left| a_0^{(k)}f_m + a_1^{(k)}f_{m-1} + \cdots + a_m^{(k)}f_0   \right|^r\\
          \frac{1}{K^{m/r}} &\geq \left| a_0^{(k)}f_m + a_1^{(k)}f_{m-1} + \cdots + a_m^{(k)}f_0 \right|  
     \end{align*}
     for all $m$.
     
     We know that $a_0^{(k)}$ is bounded in $k$.  It is also easy to see that $|f_j| \leq \|f\|_p$ for all $j$.  If $a_0^{(k)}, a_1^{(k)},\ldots, a_j^{(k)}$ are also bounded in $k$, then the relation
     \[
          \left|a_{j+1}^{(k)}\right| \leq  \frac{1}{K^{m/r}|f_0|} + \left|\frac{a_0^{(k)}f_{j+1}}{f_0}  + \frac{a_1^{(k)}f_{j}}{f_0} +\cdots  +\frac{a_j^{(k)}f_1}{f_0}\right|
     \]
ensures that $a_{j+1}^{(k)}$  is bounded as well.  This proves that all of the coefficients of $Q_k$ are uniformly bounded in $k$.

Arguing as before, we may find a subsequence from $\{Q_k\}$ that converges uniformly, and the limit must be $q_{n,p}[f]$.   In fact, this must be the limit of the original sequence.
\end{proof}

\subsection{Roots of OPAs}

As a corollary to the last continuity theorem, we begin this subsection with two results concerning the set of possible OPA roots.
Let us first establish some notation. 
\begin{Definition}
For $1<p<\infty$ and $n\ge 0$, we denote the set of possible roots of OPAs of degree $n$ in $H^p$ as
\[
\Omega_{n ,p} := \left\{ w \in \mathbb{C} : \exists f \in H^p, f(0) \neq 0 \ \text{with} \ q_{n,p}[f](w) = 0 \right\}, 
\]
and let 
\[
\Omega_p := \bigcup_{n \ge 0} \Omega_{n, p}.
\]
\end{Definition}

Note that $\Omega_{0, p} =\emptyset$ for all $p \in (1, \infty)$. 
We have an immediate proposition concerning these sets. 
\begin{Proposition}\label{1-equals-p}
For $1<p<\infty$ and each $n\ge 1$, we have $\Omega_{n, p} \subseteq \Omega_{1, p}$, and therefore 
\[
\Omega_p = \Omega_{1,p}. 
\]
\end{Proposition}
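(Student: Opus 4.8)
The plan is to reduce the degree-$n$ problem to the degree-$1$ problem by peeling off a single linear factor and transferring optimality. Suppose $w \in \Omega_{n,p}$, witnessed by some $f \in H^p$ with $f(0)\neq 0$ and $q_{n,p}[f](w)=0$. Writing $q := q_{n,p}[f]$ and factoring out the root at $w$ as $q(z) = (z-w)s(z)$ with $s \in \Pol_{n-1}$, I would set $g := sf$. The key algebraic observation is that competitors transfer cleanly: since $(z-w)g = (z-w)sf = qf$, the error is unchanged, and for any $\tilde q \in \Pol_1$ we have $\tilde q g = (\tilde q s)f$ with $\tilde q s \in \Pol_n$, so that $\|\tilde q g - 1\|_p \geq \min_{r \in \Pol_n}\|rf-1\|_p = \|qf-1\|_p = \|(z-w)g - 1\|_p$. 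Hence $z-w$ is optimal among $\Pol_1$, and by uniqueness of the OPA, $q_{1,p}[g] = z-w$, which vanishes at $w$. This places $w \in \Omega_{1,p}$, \emph{provided} $g$ is a legitimate witness.

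The legitimacy of $g$ is where the real content lies. Clearly $g = sf \in H^p$, being a polynomial multiple of an $H^p$ function, so the only requirement to check is $g(0)\neq 0$, equivalently $s(0)\neq 0$. Since $q(0) = -w\,s(0)$, this reduces (once we know $w \neq 0$) to the single fact that the OPA never vanishes at the origin when $f(0)\neq 0$. That one lemma does double duty: it shows $0 \notin \Omega_{n,p}$, so the only roots $w$ we must handle are nonzero, and it simultaneously delivers $s(0)\neq 0$ and hence $g(0)\neq 0$.

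I expect the no-root-at-origin lemma to be the main obstacle. To prove it, suppose $q(0)=0$; then the error $e := 1 - qf$ satisfies $e(0)=1$, while optimality (comparing against the competitor $0 \in \Pol_n$) forces $\|e\|_p \leq \|1\|_p = 1$. But every $h \in H^p$ obeys $|h(0)| \leq \|h\|_1 \leq \|h\|_p$, so the chain $1 = |e(0)| \leq \|e\|_1 \leq \|e\|_p \leq 1$ collapses to equalities; the sharpness of these estimates (constant modulus and constant argument) forces $e$ to be constant, whence $e \equiv 1$, $qf \equiv 0$, and $q \equiv 0$. This contradicts $q \not\equiv 0$, which holds because $q\equiv 0$ would give $1 \perp_p f\Pol_n \ni f$, i.e. $\langle f, 1\rangle = f(0) = 0$, against our hypothesis. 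The equality-case analysis for $|e(0)| \leq \|e\|_p$ is the only genuinely delicate step; everything else is bookkeeping. Assembling the pieces, every $w \in \Omega_{n,p}$ is nonzero and lies in $\Omega_{1,p}$, so $\Omega_{n,p}\subseteq\Omega_{1,p}$ for each $n\geq 1$; combined with $\Omega_{0,p}=\emptyset$, this gives $\Omega_p = \bigcup_{n\geq 1}\Omega_{n,p} = \Omega_{1,p}$.
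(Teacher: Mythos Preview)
Your proof is correct and takes essentially the same route as the paper: factor $q_{n,p}[f]=(z-w)\tilde q$, set $g=\tilde q f$, and use the sandwich of optimality inequalities together with uniqueness to conclude $q_{1,p}[g]=z-w$. You are in fact more careful than the paper's own argument, which does not explicitly verify that the witness satisfies $g(0)\neq 0$; your no-root-at-origin lemma fills that small gap cleanly.
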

\begin{proof}
    Suppose $w \in \Omega_{n, p}$ with $q_{n, p}[f](w) = 0$. Put $q_{n, p}[f] = (z-w)\tilde{q}$. Then, by optimality, we have
\begin{align*}
    \|q_{1,p}[\tilde{q}f]\tilde{q}f - 1\|_p
    &\le \|(z-w)\tilde{q}f - 1\|_p \\
    &= \|q_{n,p}[f]f - 1\|_p \\
    &\le \|q_{1,p}[\tilde{q}f]\tilde{q}f - 1\|_p,
\end{align*}
and we deduce that $q_{1,p}[\tilde{q}f] = q_{n,p}[f]/\tilde{q} = z-w$, which implies that $w \in \Omega_{1, p}$. 
\end{proof}

Presently, we see that the set of OPA roots must contain the set $\mathbb{C} \setminus \overline{\mathbb{D}}$.

\begin{Proposition}\label{roots}
    Let $1<p<\infty$.  If $w \in \mathbb{C} \setminus \overline{\mathbb{D}}$, then there exists $f \in H^p$ such that
    $q_{1,p}[f]$ has the root $w$, and so 
    \[
    \mathbb{C} \setminus \overline{\mathbb{D}} \subseteq \Omega_p.
    \]
\end{Proposition}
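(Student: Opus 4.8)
The plan is to reduce everything to a single explicit construction that forces zero approximation error. By Proposition~\ref{1-equals-p} it suffices to exhibit, for each $w \in \C \setminus \overline{\D}$, a function $f \in H^p$ with $f(0)\neq 0$ whose \emph{linear} OPA vanishes at $w$. The key observation is that a degree-one polynomial with prescribed root $w$ is, up to a scalar, exactly $1 - z/w$; this suggests choosing $f$ so that its reciprocal is precisely this polynomial, since the OPA to $1/f$ should then reproduce $1/f$ itself.

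Accordingly, I would set
\[
f(z) := \frac{1}{1 - z/w} = \frac{w}{w-z}.
\]
Since $|w| > 1$, the only pole of $f$ lies outside $\overline{\D}$, so $f$ is holomorphic on a neighborhood of the closed disk; in particular $f \in H^\infty \subseteq H^p$ and $f(0) = 1 \neq 0$. By construction $1/f(z) = 1 - z/w$ is itself a polynomial of degree one.

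Now take the competitor $q(z) = 1 - z/w \in \Pol_1$. Then $q f - 1 \equiv 0$, so $\|qf - 1\|_p = 0$, which is trivially the global minimum of $\|\tilde{q} f - 1\|_p$ over $\tilde{q} \in \Pol_1$. Because the OPA is unique for $1<p<\infty$ (uniform convexity), we conclude $q_{1,p}[f] = 1 - z/w$, and this polynomial vanishes precisely at $w$. Hence $w \in \Omega_{1,p} \subseteq \Omega_p$, and since $w$ was arbitrary, $\C \setminus \overline{\D} \subseteq \Omega_p$.

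There is essentially no analytic obstacle in this argument: the entire content is the guess that the reciprocal of a degree-one polynomial realizes the desired root with zero error, after which membership in $H^p$ and uniqueness of the minimizer are routine. The genuinely delicate regime---deciding which points of $\overline{\D}$ can or cannot occur as OPA roots---lies outside the scope of this statement, and it is there that the continuity results of the previous subsection (such as Theorem~\ref{cont-f}) would have to do the real work.
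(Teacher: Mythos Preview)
Your argument is correct and essentially identical to the paper's: both exhibit a function whose reciprocal is a degree-one polynomial vanishing at $w$, so that the linear OPA achieves zero error and must equal that polynomial by uniqueness. The only cosmetic difference is normalization---you take $f(z)=w/(w-z)$ so that $f(0)=1$ and $q_{1,p}[f]=1-z/w$, while the paper takes $f(z)=1/(z-w)$ and obtains $q_{1,p}[f]=z-w$; these differ by harmless nonzero scalars.
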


\begin{proof}
Let $w \in \mathbb{C} \setminus \overline{\mathbb{D}}$ and let  
\[
      f(z) := \frac{1}{z - w}, 
\]
which belongs to $H^p$ for all $p \in (1, \infty)$. 
Further,
\[
      \| 1 - Q(z) f(z) \|_p = 0
\]
when
\[
     Q(z) = z - w.
\]
Therefore, it must be that $q_{n,p}[f](z) = z - w$ for all $n \geq 1$. Hence, $w \in \Omega_p$.
\end{proof}

We now show that this set is connected and symmetric under rotation. 

\begin{Proposition}\label{connected}
     For $1<p<\infty$, the set $\Omega_p$ is rotationally symmetric and connected.
\end{Proposition}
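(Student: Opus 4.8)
The plan is to reduce everything to linear OPAs and then treat the two properties separately: the symmetry by a change of variable, and the connectedness by a continuity-plus-intermediate-value argument. By Proposition \ref{1-equals-p} we have $\Omega_p = \Omega_{1,p}$, so throughout it suffices to track the single root of a degree-one OPA.

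For the rotational symmetry, I would introduce, for $|\lambda| = 1$, the rotation operator $(R_\lambda f)(z) = f(\lambda z)$, which is an isometry of $H^p$ fixing the value at $0$. Since $R_{\bar\lambda}(q\cdot R_\lambda f - 1) = (R_{\bar\lambda}q)f - 1$ and $q \mapsto R_{\bar\lambda} q$ is a bijection of $\Pol_n$, minimizing $\|q(R_\lambda f) - 1\|_p$ over $\Pol_n$ is exactly the same extremal problem as minimizing $\|\tilde q f - 1\|_p$, which yields the identity $q_{n,p}[R_\lambda f] = R_\lambda\, q_{n,p}[f]$. Consequently $w$ is a root of $q_{n,p}[f]$ if and only if $\bar\lambda w$ is a root of $q_{n,p}[R_\lambda f]$, so the whole circle $\{\mu w : |\mu| = 1\}$ lies in $\Omega_p$ whenever $w$ does.

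For connectedness, the symmetry lets me write $\Omega_p = \{w : |w| \in R\}$ where $R = \{|w| : w \in \Omega_p\} \subseteq [0,\infty)$ is nonempty by Proposition \ref{roots}, and it then suffices to show $R$ is an interval. I would first record that the degree-one OPA is never the zero polynomial when $f(0)\neq 0$: were $q_{1,p}[f]\equiv 0$ we would have $1 \perp_p f$, i.e. $\int_{\mathbb{T}} f\,dm = f(0) = 0$, a contradiction. Hence, writing $q_{1,p}[f] = a_0 + a_1 z$, the pair $(a_0,a_1)$ is never $(0,0)$, so the root, viewed as the point $[a_1 : -a_0]$ of the Riemann sphere $\widehat{\mathbb{C}}$, is well defined; it is a genuine finite element of $\Omega_p$ exactly when $a_1 \neq 0$, and escapes to $\infty$ precisely when the linear OPA degenerates to a constant. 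To see $R$ is convex, take $\rho_0,\rho_1 \in R$ with witnesses $f^{(0)}, f^{(1)}$ and join them by a path $(f_t)_{t\in[0,1]}$ inside $\{f \in H^p : f(0)\neq 0\}$, which is path-connected since it is the complement of the kernel of the continuous functional $f \mapsto f(0)$ (concatenate radial segments to the constant functions $f(0)\mathbf 1$ and $g(0)\mathbf 1$ with a loop through $\mathbb{C}\setminus\{0\}$ between them). Theorem \ref{cont-f} makes $t \mapsto (a_0(t), a_1(t))$ continuous, hence $t \mapsto [a_1(t) : -a_0(t)] \in \widehat{\mathbb{C}}$ is continuous and so is the radius $\rho(t) \in [0,\infty]$ given by the modulus of the root, with endpoint values $\rho_0$ and $\rho_1$. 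By the intermediate value theorem every $\rho$ strictly between $\rho_0$ and $\rho_1$ is attained; such $\rho$ is finite, so the corresponding root lies in $\mathbb{C}$ and $\rho \in R$. Thus $R$ is an interval, and since circles are connected and $R$ is an interval, $\Omega_p = \{w : |w| \in R\}$ is path-connected (move along a circle, then radially).

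The main obstacle is the degeneration of the linear OPA to a constant along the connecting path, where the root runs off to infinity and the naive root map $f \mapsto -a_0/a_1$ is discontinuous; passing to the Riemann sphere repairs this. The key point making the excursions to $\infty$ harmless is that we only need to realize finite intermediate radii, at which the root is automatically finite, so the behavior at $\infty$ never interferes. The remaining items to verify carefully are the path-connectedness of $\{f(0)\neq 0\}$ and that Theorem \ref{cont-f} genuinely applies at every parameter $t$, which it does because $f_t(0) \neq 0$ is maintained along the chosen path.
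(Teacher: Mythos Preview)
Your argument is correct. For rotational symmetry it is the same idea as the paper's: a change of variable $z\mapsto\lambda z$, which you package via the rotation operator $R_\lambda$ and the identity $q_{n,p}[R_\lambda f]=R_\lambda\,q_{n,p}[f]$.

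For connectedness the two routes diverge. The paper restricts attention to witnesses $f,g$ with \emph{real} Taylor coefficients and $f(0),g(0)>0$, so that the straight segment $tf+(1-t)g$ stays inside $\{h:h(0)\neq 0\}$, and then appeals to continuity of $F\mapsto q_{1,p}[F]$ to conclude that the image of roots is connected. This is brief but leaves two points implicit: why an arbitrary $w\in\Omega_p$ can, after rotation, be witnessed by such a real-coefficient function; and what happens if, for some $t$, the linear OPA collapses to a nonzero constant (so that there is no finite root at all). Your approach sidesteps both. You allow an arbitrary continuous path inside the path-connected set $\{f\in H^p:f(0)\neq 0\}$, and you compactify by sending the root to $\widehat{\mathbb{C}}$ via $(a_0,a_1)\mapsto[a_1:-a_0]$, so the root map is globally defined and continuous once you know $(a_0,a_1)\neq(0,0)$. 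The intermediate value theorem on $[0,\infty]$ then recovers every \emph{finite} radius between the endpoint radii, which is precisely what is needed for $R$ to be an interval; the possible excursions to $\infty$ along the path are harmless because only finite intermediate values are claimed. The cost is a little extra machinery (the sphere and the explicit three-piece path through constants); the gain is that no side hypothesis on the witnesses is needed and the degeneration $a_1=0$ is handled cleanly rather than tacitly excluded.
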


\begin{proof}
    We begin by establishing rotational symmetry.  Let $f \in H^p$, and suppose $q_{1,p}[f] = a(z-w)$ (by Proposition \ref{1-equals-p}, it suffices to take the linear OPA).   
    Then for any $\gamma$ with $|\gamma| = 1$,
    \begin{align*}
         \|1 - a(z-w)\|_p^p  &=  \int_0^{2\pi} |1 - a(z-w)f(z)|^p\,dm(z) \\
              &=  \int_0^{2\pi} |1 - a(\gamma \zeta -w)f(\gamma\zeta)|^p|\gamma|^p\,dm(\zeta) \\
              &=   \int_0^{2\pi} |1 - a(\gamma \zeta -w)f(\gamma\zeta)|^p\,dm(\zeta) \\
              &=   \int_0^{2\pi} |1 - (a\gamma)(\zeta -\overline{\gamma}w)f(\gamma\zeta)|^p\,dm(\zeta).
    \end{align*}
    It must be that $(a\gamma)(z -\overline{\gamma}w)$ is the linear OPA for $f(\gamma z)$, for otherwise, by reversing these steps from
    \[
           \left\|1 - q_{1,p}[f(\gamma z)] f(\gamma z)\right\|_p^p
    \]
    we obtain a contradiction.
    
    This shows that if $w$ is an OPA root, then so is $\overline{\gamma} w$ for all $\gamma$, $|\gamma|=1$.  That is, the set $\Omega_p$ is rotationally symmetric.
    
Next, suppose that $f$ and $g$ belong to $H^p$, with real coefficients, and with $f(0)>0$ and $g(0)>0$.  Let their linear OPA roots be $r$ and $R$ respectively, where $0<r<R$.  By the continuity of the map $F\longmapsto q_{1,p}[F]$, we see that the set of linear OPA roots of the collection of functions $tf + (1-t)g$, $0 \leq t \leq 1$, must be an interval containing $[r,R]$; this is because the collection of functions is connected, and continuous maps preserve connectivity.  Note that $tf(0) + (1-t)g(0) > 0$ for all $t$, as required for the linear OPA to be nontrivial.  Consequently, $\Omega_p$ is path connected, and hence connected.
\end{proof}

\subsection{Fixed point approach}
Again, we mention that computing OPAs when $p \neq 2$ is a challenging task. Here, we explore the idea of OPAs being fixed points of an iterative process. 
We begin with the degree zero case and then move to the degree one case. 

\begin{Theorem}\label{fp-zero}
Let $2<p<\infty$, and let $f \in H^p$ be a nonconstant function.   Then the degree zero OPA $q_{0,p}[f]$ is the unique solution to the fixed point equation   
\[
      \zeta = \Phi(\zeta),
\]
where $\Phi: \mathbb{C} \longmapsto \mathbb{C}$ is given by
\[
      \Phi(\zeta) :=  \left(\int_\T|1 - \zeta f|^{p-2} \overline{f}\,dm  \right)\left( \int_\T |1 - \zeta f|^{p-2} |f|^2\,dm \right)^{-1}.
\]
Moreover, for any $\lambda_1 \in \mathbb{C}$, the sequence $\{\lambda_k\}$ given by $\lambda_{k+1} = \Phi(\lambda_k)$ converges to $q_{0,p}[f]$.
\end{Theorem}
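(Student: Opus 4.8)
The plan is to split the claim into three parts: the OPA is a fixed point of $\Phi$; it is the \emph{only} fixed point; and the iteration converges to it. For the first part I would identify fixed points of $\Phi$ with solutions of the OPA orthogonality relation. By the James criterion \eqref{BJp}, the degree-zero OPA $a = q_{0,p}[f]$ is the unique scalar with $1 - af \perp_p f$, i.e. $\int_\T |1-af|^{p-2}\overline{(1-af)}\,f\,dm = 0$. Expanding $\overline{(1-af)} = 1 - \bar a\bar f$, isolating $\bar a$, and conjugating (the denominator $\int_\T|1-af|^{p-2}|f|^2\,dm$ being real and positive, as $f\not\equiv 0$) yields exactly $a = \Phi(a)$. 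Conversely, unwinding $\zeta = \Phi(\zeta)$ gives $\int_\T |1-\zeta f|^{p-2}\bar f(1-\zeta f)\,dm = 0$, the conjugate of the relation $1-\zeta f\perp_p f$. Hence $\zeta$ is a fixed point of $\Phi$ if and only if $1-\zeta f\perp_p f$, if and only if $\zeta$ is a critical point of $J(\zeta):=\|\zeta f - 1\|_p^p = \int_\T|1-\zeta f|^p\,dm$.

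For uniqueness I would use that $J$, as a function on $\mathbb{C}\cong\mathbb{R}^2$, is strictly convex and coercive: strict convexity follows from the uniform convexity of $L^p$ together with injectivity of the affine map $\zeta\mapsto \zeta f-1$ (again $f\not\equiv 0$), and coercivity from $\|\zeta f-1\|_p\ge |\zeta|\,\|f\|_p-1$. Thus $J$ has a single critical point, its unique minimizer $q_{0,p}[f]$, so by the previous paragraph $\Phi$ has exactly one fixed point and it is the OPA.

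The convergence is where the real work lies, and I would attempt it through the Banach contraction principle. First I would trap the orbit in a compact set: as $|\zeta|\to\infty$ one has $1-\zeta f\sim -\zeta f$, both integrals defining $\Phi$ grow like $|\zeta|^{p-2}$, and $\Phi(\zeta)$ tends to a finite limit; hence $\Phi$ is bounded and the orbit lies in some closed disk $\overline{D(0,M)}$. Next I would show $\Phi$ is a contraction there. Differentiating under the integral (justified since, $f$ being analytic and nonconstant, $1-\zeta f$ vanishes only on a null set), a computation puts the real differential in conformal/anticonformal form $d\Phi = -\tfrac{p-2}{2}\bigl(d\zeta + c(\zeta)\,d\bar\zeta\bigr)$, where $c(\zeta) = \left(\int_\T|1-\zeta f|^{p-4}(1-\zeta f)^2\bar f^2\,dm\right)\left(\int_\T|1-\zeta f|^{p-2}|f|^2\,dm\right)^{-1}$ satisfies $|c(\zeta)|\le 1$ by a pointwise bound. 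The operator norm of such a map is $\tfrac{p-2}{2}(1+|c(\zeta)|)$.

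The main obstacle is pushing this norm below $1$. For $2<p<3$ it is immediate and uniform, since $\tfrac{p-2}{2}(1+|c|)\le p-2<1$ at every $\zeta$, giving a global contraction and geometric convergence from any $\lambda_1$. For larger $p$ the crude estimate exceeds $1$, and one cannot retreat to a monotonicity argument: the optimality of $\Phi(\zeta)$ only yields $\int_\T|1-\zeta f|^{p-2}|1-\Phi(\zeta)f|^2\,dm\le J(\zeta)$, and for $p>2$ Jensen's inequality points the wrong way, so this does \emph{not} upgrade to $J(\Phi(\zeta))\le J(\zeta)$ (indeed reweighted iterations of this type can oscillate when $p$ is large). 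Extending the contraction to all $p\in(2,\infty)$ is therefore the crux. I would try to extract a strict, quantitative gap $|c(\zeta)|\le c_0<1$ on the invariant disk from the nonconstancy of $f$ (which prevents $1-\zeta f$ and $\bar f$ from being proportional) and—since even a strict gap only reaches $p<2+2/(1+c_0)$—to retain the phase relationship between the $d\zeta$ and $d\bar\zeta$ terms instead of bounding their moduli separately, seeking the cancellation that the triangle inequality discards. This sharper differential estimate, genuinely exploiting the analytic structure of $f$, is the step I expect to be hardest.
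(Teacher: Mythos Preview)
Your identification of fixed points of $\Phi$ with the orthogonality relation $1-\zeta f\perp_p f$, and your uniqueness argument via strict convexity of $\zeta\mapsto\|1-\zeta f\|_p^p$, are both sound; the paper does the former explicitly and the latter implicitly, by invoking uniqueness of the OPA.

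The convergence argument is where you and the paper part ways, and where your proposal has a real gap. The paper never differentiates $\Phi$ or seeks a contraction. It argues that $\Phi$ is continuous on $\mathbb{C}$ and has a finite limit as $|\zeta|\to\infty$ (via H\"older and dominated convergence, using only $p>2$), hence is bounded; the orbit is therefore bounded, subsequential limits exist, and---the paper asserts---any such limit is a fixed point, hence equals $q_{0,p}[f]$ by uniqueness, forcing the full sequence to converge. This line is indifferent to whether $p<3$ or $p\ge 3$, so it sidesteps the obstacle you ran into. Your contraction route, by contrast, really does stall at $p\ge 3$: the Jacobian bound $\tfrac{p-2}{2}(1+|c(\zeta)|)$ can exceed $1$, and neither of your suggested fixes is yet a proof---even a uniform strict gap $|c|\le c_0<1$ only extends the range to $p<2+2/(1+c_0)<4$, and ``retaining phase information'' is too vague to evaluate. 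As written, then, the proposal does not cover the full range $2<p<\infty$.

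One caution if you adopt the paper's compactness route: the step ``any subsequential limit is a fixed point'' is more delicate than it looks. From $\lambda_{n_k}\to\lambda$ and continuity one obtains $\lambda_{n_k+1}=\Phi(\lambda_{n_k})\to\Phi(\lambda)$, but $\{n_k+1\}$ is a \emph{different} subsequence, so $\Phi(\lambda)=\lambda$ does not follow immediately; and boundedness of a continuous self-map with a unique fixed point does not by itself force iteration to converge (consider $\Phi(z)=-z$). The paper glosses over this point, so be prepared to supply an additional argument there.
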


\begin{proof}  Since $p>2$, and
\[
     1 = \frac{2}{p} + \frac{p-2}{p},
\]
the parameters $p/2$ and $p/(p-2)$ are H\"{o}lder conjugates of each other.  Hence H\"{o}lder's inequality gives
\[
     \int_\T |1 - \zeta f|^{p-2} |f|^2\,dm  \leq  \left(\int_\T |1 - \zeta f|^{(p-2)p/(p-2)} \,dm\right)^{(p-2)/p}\left(\int _\T|f|^{2(p/2)}\,dm\right)^{2/p}<\infty.
\]
Furthermore, since $f$ is nonconstant, the integral in the denominator of $\Phi$ is nonzero for any value of $\zeta$.  

When $\zeta \neq 0$, we have
\begin{align*}
    {|\zeta|} \int_\T |1 - \zeta f|^{p-2} |{f}| \,dm &\leq \int_\T |1 - \zeta f|^{p-2} \left( |1 - \zeta{f}| + 1\right)\,dm \\
      &=  \int_\T |1 - \zeta f|^{p-1}\,dm + \int_\T |1 - \zeta f|^{p-2} \,dm \\
      &< \infty;
\end{align*}
  and when $\zeta = 0$ 
  \[
       \int_\T |1 - \zeta f|^{p-2} |{f}| \,dm  = \int_\T |1 - 0\cdot  f|^{p-2} |{f}| \,dm  = \int_\T  |{f}| \,dm  < \infty.
  \] 
This verifies that $\Phi$ is well defined for all $\zeta \in \mathbb{C}$.  

In fact, $\Phi$ is continuous and bounded.  Continuity of the numerator and denominator of $\Phi$ at any point $\zeta_0$ can be established by a Dominated Convergence argument, with respective dominating functions 
\[
       2^{p-2} (1 + C|f|^{p-2})|f|\ \ \ \mbox{and}\ \ \ 2^{p-2} (1 + C|f|^{p-2})|f|^2,
\]
where $C > |\zeta_0|^{p-2}$.  Continuity at infinity is established by 
\begin{align*}
    \lim_{\zeta\rightarrow\infty} \frac{ \int_\T|1 - \zeta f|^{p-2} \overline{f}\,dm  }{ \int_\T|1 - \zeta f|^{p-2} |f|^2\,dm }
         &=   \lim_{\zeta\rightarrow\infty} \frac{ \int_\T|1/\zeta - f|^{p-2} \overline{f}\,dm  }{ \int_\T|1/\zeta - f|^{p-2} |f|^2\,dm } \\
        &=   \lim_{\zeta\rightarrow\infty} \frac{ \int_\T| f|^{p-2} \overline{f}\,dm  }{ \int_\T|f|^{p}\,dm }. 
\end{align*}

Consequently, $\Phi$ is a bounded function.  For any choice of $\lambda_1 \in \mathbb{C}$, define $\lambda_{k+1} = \Phi(\lambda_k)$ for all $k=1, 2, 3,\ldots$.  The resulting sequence $\{\lambda_k\}$ is a bounded sequence, and must contain a convergent subsequence, $\{\lambda_{n_k}\}$, with $\lambda_k \longrightarrow \lambda \in \mathbb{C}$.    Continuity ensures that 
\[
        \lambda = \Phi(\lambda),
\]
which is to say that
\begin{align*}
      \lambda \ \int_\T |1 - \lambda f|^{p-2} f \overline{f}\,dm  &= \int_\T |1 - \lambda f|^{p-2} \overline{f}\,dm\\
      0 &=  \int_\T |1 - \lambda f|^{p-2}(1 - \lambda {f}) \overline{f}\,dm,
\end{align*}
   or $1 - \lambda f \  \perp_p  \ f$.  This shows that $\lambda = q_{0,p}[f]$.   
   
But any subsequence of $\{\lambda_k\}$ must have a further subsequence that converges to the same limit.  Thus the sequence $\{\lambda_k\}$ itself must converge to $\lambda = q_{0,p}[f]$.
\end{proof}

We now discuss the linear case, first recording some notation.

Let a linear polynomial $Q_1(z) = a_1 + b_1 z$ be given and, for $k\geq 1$, let
\begin{align}
       \left[\begin{array}{c} a_{k+1} \\ b_{k+1} \end{array}\right] &= \left[ \begin{array}{cc} C_k & \overline{D_k} \\D_k & C_k   \end{array} \right]^{-1}
       \left[\begin{array}{c}  A_k \\ B_k \end{array}\right]  \label{recursivescheme2} \\
       & \nonumber  \\ 
             &= \frac{1}{|C_k|^2 + |D_k|^2}\left[ \begin{array}{rr} C_k & -\overline{D_k} \\ -D_k & C_k   \end{array} \right]
       \left[\begin{array}{c}  A_k \\ B_k \end{array}\right]  \nonumber  \\
       &  \nonumber \\
       &= \frac{1}{|C_k|^2 + |D_k|^2}
       \left[\begin{array}{c}  A_kC_k-B_k\overline{D_k} \\ B_kC_k-A_kD_k \end{array}\right],  \nonumber 
\end{align}
where
\begin{align}
    A_k &=  \int_\T |1 - Q_k f|^{p-2}\overline{f}\,dm  \label{formfora2} \\
    B_k &=    \int_\T |1 - Q_k f|^{p-2}\overline{zf}\,dm \nonumber  \\
    C_k &=    \int_\T |1 - Q_k f|^{p-2}|f|^2\,dm  \nonumber   \\
    D_k &=         \int_\T |1 - Q_k f|^{p-2}\overline{z} |f|^2\,dm, \nonumber 
\end{align}
and $Q_k(z) = a_k + b_k z$.  This determines a sequence of linear polynomials.

\begin{Theorem}\label{fp-one}
     Let $2<p<\infty$, and suppose that $f \in H^p$ is a nonconstant polynomial with $f(0) \neq 0$.  If $Q_k(z) = a_k + b_k z$, $k \geq 0$,  is the sequence of linear polynomials arising from \eqref{recursivescheme2}, then $Q_k$ converges to $q_{1,p}[f]$.
\end{Theorem}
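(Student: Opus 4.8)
The plan is to read the recursion \eqref{recursivescheme2} as an iteratively-reweighted-least-squares (IRLS) scheme and to identify its fixed points with $q_{1,p}[f]$. Writing $w_k := |1-Q_kf|^{p-2}$ and multiplying out the matrix equation (using $\overline{D_k}=\int_\T w_k\, z|f|^2\,dm$ and $a_{k+1}+b_{k+1}z=Q_{k+1}$), the two scalar equations become
\[
\int_\T w_k\,(1-Q_{k+1}f)\,\overline{f}\,dm = 0 \qquad\text{and}\qquad \int_\T w_k\,(1-Q_{k+1}f)\,\overline{zf}\,dm = 0,
\]
so $Q_{k+1}$ is exactly the minimizer of $Q\mapsto\int_\T w_k|1-Qf|^2\,dm$ over $\Pol_1$, i.e. $1-Q_{k+1}f$ is orthogonal to $\operatorname{span}\{f,zf\}$ in the weighted inner product with weight $w_k$. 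Consequently a fixed point $Q=a+bz$ satisfies $\int_\T|1-Qf|^{p-2}(1-Qf)\overline f\,dm=0$ and the same with $\overline{zf}$; these are the James conditions \eqref{BJp} (up to conjugation), so $1-Qf\perp_p f$ and $1-Qf\perp_p zf$, hence $1-Qf\perp_p f\Pol_1$. By uniqueness of the OPA, $q_{1,p}[f]$ is the \emph{only} fixed point of the scheme.

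Next I would verify that the scheme is well-defined and continuous. The coefficient matrix is Hermitian with real diagonal $C_k>0$; with $w_k\,dm$ a finite positive measure, $|D_k|=\big|\int_\T \overline z\, w_k\,dm\big|\le\int_\T w_k\,dm=C_k$, and the inequality is strict because $f$ is a nonconstant polynomial, so $w_k>0$ a.e. and $\overline z$ cannot have constant argument. Thus $\det = C_k^2-|D_k|^2>0$, the inverse exists, and the entries $A_k,B_k,C_k,D_k$ depend continuously on $(a_k,b_k)$ by dominated convergence (the $p-2>0$ powers are harmless). Hence $\Phi:(a_k,b_k)\mapsto(a_{k+1},b_{k+1})$ is continuous. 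Boundedness of $\{Q_k\}$ I would obtain exactly as in Theorem \ref{cont-f}: from $1\perp_p zH^p$ and the lower Pythagorean inequality, $1\ge\|1-Q_{k+1}f\|_p^r\ge|1-a_{k+1}f(0)|^r$ bounds $\{a_{k+1}\}$ (using $f(0)\neq0$), and then subharmonicity together with the triangle inequality bounds $\{b_{k+1}\}$.

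With boundedness secured, the scheme of Theorem \ref{fp-zero} applies: extract a convergent subsequence $Q_{m_j}\to\mu$, so that continuity gives $Q_{m_j+1}=\Phi(Q_{m_j})\to\Phi(\mu)$. The target is to show $\mu$ is a fixed point; then $\mu=q_{1,p}[f]$, and since every accumulation point equals $q_{1,p}[f]$ the full bounded sequence converges. To connect with the error I would use the lower Pythagorean inequality (valid since $p>2$): writing $q^*:=q_{1,p}[f]$ and noting $1-q^*f\perp_p f\Pol_1\ni(q^*-Q_k)f$,
\[
\|(Q_k-q^*)f\|_p^p \le (2^{p-1}-1)\big(\|1-Q_kf\|_p^p-\|1-q^*f\|_p^p\big),
\]
so, because $Q\mapsto Qf$ is bounded below on the finite-dimensional $\Pol_1$, the convergence $Q_k\to q_{1,p}[f]$ follows as soon as the objective values $\|1-Q_kf\|_p^p$ tend to their minimum.

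The main obstacle is precisely this last point, namely showing that the accumulation point $\mu$ is a fixed point (equivalently, that the objective tends to the minimum). For $1<p\le2$ this is the standard IRLS descent: weighted-least-squares optimality combined with the concavity of $t\mapsto t^{p/2}$ majorizes $|1-Qf|^p$ by a quadratic and yields $\|1-Q_{k+1}f\|_p\le\|1-Q_kf\|_p$. For $p>2$ that majorization \emph{reverses}, and plain reweighting need not decrease the objective, so one must rule out cycling of accumulation points directly. The optimality of $Q_{k+1}$ does give the one usable estimate
\[
\int_\T |1-Q_kf|^{p-2}\,|1-Q_{k+1}f|^2\,dm \le \int_\T |1-Q_kf|^p\,dm,
\]
and the delicate step is to combine this with continuity of $\Phi$ and the strict convexity/uniqueness of the minimizer—exploiting that $f$ is a polynomial and that the problem is only two-dimensional—to force $\Phi(\mu)=\mu$ rather than a nontrivial periodic orbit. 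This is where the real work of the proof concentrates.
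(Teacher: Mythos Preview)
Your boundedness argument has a genuine gap. You invoke $1\ge\|1-Q_{k+1}f\|_p^r$ ``exactly as in Theorem \ref{cont-f}'', but in that theorem the $Q_k$ are honest OPAs, so $\|1-Q_kf_k\|_p\le\|1-0\cdot f_k\|_p=1$ is automatic. Here $Q_{k+1}$ is only the \emph{weighted-$L^2$} minimizer with weight $|1-Q_kf|^{p-2}$, and there is no a priori reason its $L^p$ error is at most $1$; indeed, as you yourself observe, for $p>2$ the IRLS step need not decrease $\|1-Qf\|_p$. (A minor slip: in your determinant estimate you dropped the factor $|f|^2$ from $C_k$ and $D_k$; the corrected inequality $|D_k|<C_k$ still holds by the same argument.) The paper obtains boundedness by a completely different, homogeneity-based route. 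Writing $c_k=\max\{|a_k|,|b_k|\}$, one factors $|1-Q_kf|^{p-2}=|c_k|^{p-2}\,|1/c_k-(Q_k/c_k)f|^{p-2}$; the scalar $|c_k|^{p-2}$ appears identically in every integral defining $A_k,B_k,C_k,D_k$ and therefore cancels in the update \eqref{recursivescheme2}. Thus $(a_{k+1},b_{k+1})$ is the value of a continuous function of the pair $(1/c_k,\,Q_k/c_k)$, which for $c_k\ge 1$ ranges over the compact set $\{|w|\le1\}\times\{a+bz:\max\{|a|,|b|\}=1\}$, while the case $c_k<1$ is trivially controlled. This gives a uniform bound on $c_{k+1}$ without any comparison of $\|1-Q_{k+1}f\|_p$ with $1$.

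On the second issue you flag---why a subsequential limit $\mu$ of $\{Q_k\}$ must be a fixed point---your caution is well placed: from $Q_{m_j}\to\mu$ and continuity of $\Phi$ one obtains only $Q_{m_j+1}\to\Phi(\mu)$, not $\Phi(\mu)=\mu$, so ruling out periodic orbits is indeed the crux. The paper, however, does not carry out the additional work you sketch; it simply asserts that the subsequential limit ``satisfies the orthogonality conditions for $q_{1,p}[f]$'' and then invokes uniqueness. So the step you identify as ``where the real work concentrates'' is handled in the paper by assertion rather than argument.
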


\begin{proof}  If $Q_1$ is identically zero, then by inspection we see that $Q_2$ is not the zero polynomial.  Thus, by relabeling if necessary, let us assume $Q_1$ is not identically zero.

By the hypotheses on $f$, the expression $w-Qf$ is a nonconstant polynomial for any complex number $w$ and linear polynomial $Q$; hence
$|w - Qf|^{p-2}$ will be integrable on the unit circle.

Consider the expression, with integrals being taken over the circle,
\[
    \Phi(Q) :=   \left| \frac{ \int |Qf|^{p-2} \overline{f}\,dm  \int |Qf|^{p-2} |f|^2\,dm - \int |Qf|^{p-2} \overline{zf}\,dm  \int |Qf|^{p-2} \overline{z}|f|^2\,dm  }{ \left|\int |Qf|^{p-2} |f|^2\,dm \right|^2 + \left|\int |Qf|^{p-2} \overline{z}|f|^2\,dm \right|^2  }\right|,
\]
as $Q$ varies over set 
\[
\mathscr{Q} := \{ a + bz \in \Pol_1 : \max\{|a|, |b|\} = 1 \}.
\]
Under the assumptions on $f$, the denominator is bounded away from zero.  Thus $\Phi(Q)$ is a continuous function on a compact set, and achieves its maximum.  
In fact, the value of $\Phi(Q)$ is indifferent to rescaling $Q$, except for multiplying it by zero.  

From this we can further deduce that the values of
\begin{align*}
    &\Psi(Q,w) := \\  &\left| \frac{ \int |w-Qf|^{p-2} \overline{f}\,dm  \int |w-Qf|^{p-2} |f|^2\,dm - \int |w-Qf|^{p-2} \overline{zf}\,dm  \int |w-Qf|^{p-2} \overline{z}|f|^2\,dm  }{ \left|\int |w-Qf|^{p-2} |f|^2\,dm \right|^2 + \left|\int |w-Qf|^{p-2} \overline{z}|f|^2\,dm \right|^2  }\right|,
\end{align*}
are uniformly bounded for $Q \in \mathscr{Q}$ and $|w| \leq 1$.

Next, notice that for any nonzero linear polynomial $Q(z) = a + bz$ we have
\[
      \int_\T |1 - Q f|^{p-2}\overline{f}\,dm  =  \int_\T |1 - (a + bz) f|^{p-2}\overline{f}\,dm =  |c|^{p-2}\int_\T |1/c - (a/c + [b/c]z) f|^{p-2}\overline{f}\,dm,
\]
where $c := \max\{|a|, |b|\}$.  This is to say that the value of $A_k$ in \eqref{formfora2} scales in a simple way with $c$, with the result that $f$ is multiplied by a member of $\mathscr{Q}$, and the $1$ inside the integrand is replaced by $1/c$.  Similar remarks apply to the formulas for $B_k$, $C_k$, and $D_k$.

Consequently, when $A_k$, $B_k$, $C_k$ and $D_k$ are assembled together to yield $a_{k+1}$ and $b_{k+1}$, the scaling factors $|c|^{p-2}$ attached to each integral cancel.   

Let us write $c_{k} := \max\{|a_{k}|, |b_{k}|\}$.
The above observations establish that $|c_{k+1}|$ is uniformly bounded as  $k$ varies over such indices that $|c_k| \geq 1$.

For the other values of $k$, for which $|c_k| < 1$, the corresponding expressions for $|1 - Q_k f|^{p-2}$ are again uniformly bounded in the obvious way, implying that the resulting $c_{k+1}$ are also uniformly bounded.

This shows that $\{Q_k\}$ is a bounded sequence of linear polynomials, which must therefore have a convergent subsequence.  The limit is a linear polynomial $Q_{\infty}$, which satisfies the orthogonality conditions for $q_{1,p}[f]$, and hence must be the OPA.  Uniqueness of the OPA ensures that, in fact, every subsequence of $\{Q_k\}$ has a further subsequence that converges to $q_{1,p}[f]$.  In conclusion, we have
\[
       \lim_{k\rightarrow\infty} Q_k = q_{1,p}[f].
\]
\end{proof}

We end this section by noting that Theorems \ref{fp-zero} and \ref{fp-one} are only established for $2 < p < \infty$, and, in the degree one case, only for polynomials. It is currently unclear if these results extend to $1 < p < 2$, or if analogous results hold for higher degree OPAs.



\section{Error Bounds and Duality Arguments}\label{error}

The present section is concerned with estimating (both above and below) the quantity $\|q_{n,p}[f]f - 1\|_p$, i.e., the ``error'' in the optimal polynomial approximation algorithm. 
We begin by employing some duality methods, first recalling a fundamental result from classical functional analysis, tailored to our setting. 

 \begin{Lemma}\label{duality}
 Let $1 < p < \infty$ and $f \in H^p$. For any $n\in \mathbb{N}$, we have
 \[
\| q_{n,p}[f]f -1\|_p  =   \left[ \inf_{\psi \in L^q} \left\{ \|\psi\|_{L^q}   : \psi_0 = 1, \langle z^k f, \psi \rangle =0\ \forall \ 0 \leq k \leq n  \right\} \right]^{-1}.
\]
 \end{Lemma}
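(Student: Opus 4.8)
The plan is to read the left-hand side as a distance and exploit the standard duality between distance-to-a-subspace and the norm of an annihilating functional. Write $M := f\Pol_n = \bigvee\{f, zf, \dots, z^n f\}$ and $E := 1 - q_{n,p}[f]f$, so that $\|q_{n,p}[f]f - 1\|_p = \|E\|_p = \mathrm{dist}_{L^p}(1, M)$ (the minimum is the same whether taken in $H^p$ or $L^p$, since $1$ and $M$ already live in $H^p$). Identifying $(L^p)^* = L^q$ through the pairing $\langle g, \psi\rangle = \int_\T g\bar\psi\,dm$, the annihilator of $M$ is exactly $\{\psi \in L^q : \langle z^k f, \psi\rangle = 0,\ 0 \le k \le n\}$, and $\langle 1, \psi\rangle = \overline{\psi_0}$. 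Rather than quote the abstract theorem, I would establish the two matching inequalities directly, since one of them produces the extremal functional explicitly out of the error $E$ and keeps everything self-contained.

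For the easy direction, take any competitor $\psi \in L^q$ with $\psi_0 = 1$ and $\langle z^k f, \psi\rangle = 0$ for $0 \le k \le n$. Because $q_{n,p}[f]f \in M$, linearity and annihilation give $\langle q_{n,p}[f]f, \psi\rangle = 0$, so $\langle E, \psi\rangle = \langle 1, \psi\rangle = \overline{\psi_0} = 1$. Hölder's inequality then yields $1 = |\langle E, \psi\rangle| \le \|E\|_p\|\psi\|_q$, i.e. $\|\psi\|_q \ge \|E\|_p^{-1}$. Taking the infimum over all such $\psi$ shows $[\inf\{\dots\}]^{-1} \le \|E\|_p$.

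The substantive step is to exhibit a competitor attaining this bound. I would set $\Psi := |E|^{p-2}E$, the conjugate of $E^{\langle p-1\rangle}$, which lies in $L^q$ since $\|\Psi\|_q^q = \int_\T |E|^{(p-1)q} = \int_\T |E|^p = \|E\|_p^p$. The defining optimality relation $E \perp_p M$ is precisely James's test $\int_\T |E|^{p-2}\bar E\,(z^k f)\,dm = 0$, which says $\langle z^k f, \Psi\rangle = 0$; hence $\Psi$ annihilates $M$. Using annihilation once more, $\langle 1, \Psi\rangle = \langle E, \Psi\rangle = \int_\T |E|^p = \|E\|_p^p$, so $\overline{\Psi_0} = \|E\|_p^p$ is real and positive. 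Normalizing $\psi := \Psi/\|E\|_p^p$ then gives $\psi_0 = 1$, preserves annihilation, and has $\|\psi\|_q = \|E\|_p^{p-1}/\|E\|_p^p = \|E\|_p^{-1}$. This competitor forces $\inf\{\dots\} \le \|E\|_p^{-1}$, i.e. $[\inf\{\dots\}]^{-1} \ge \|E\|_p$, and combining with the previous paragraph gives equality.

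I expect the main obstacle to be the construction and verification of the extremal $\psi$: one must recognize that the norming functional for $E$ in $L^p$ is the (conjugated) $p$-duality map $E^{\langle p-1\rangle}$, and then that the orthogonality relation defining the OPA is exactly what places this functional in the annihilator of $M$, so the James/Birkhoff-James machinery of Section~\ref{prelim} does the real work. A minor point to dispatch is the degenerate case $\|E\|_p = 0$ (equivalently $1 \in M$), where the right-hand constraint set is empty and both sides are interpreted with the conventions $\inf\emptyset = +\infty$ and $\infty^{-1} = 0$.
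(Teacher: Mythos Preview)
Your proof is correct. The approach differs from the paper's in a useful way: the paper invokes an abstract duality theorem, identifying $\mathrm{dist}_{L^p}(1,\mathscr P_n f)$ with the norm of the constant $1$ as a functional on the annihilator $(\mathscr P_n f)^\perp$, and then rewrites that norm as the stated reciprocal infimum through a chain of equalities. You instead prove the two matching inequalities by hand. The upper bound is the same H\"older argument implicit in the paper's sup-formulation, but your lower bound is more explicit: you build the extremal $\psi$ directly as the (normalized) $p$-duality map $|E|^{p-2}E$ of the error, and verify membership in the annihilator by recognizing it as precisely the James criterion $E\perp_p z^k f$. This buys a self-contained argument that does not appeal to the quotient-space duality theorem and, more concretely, exhibits the minimizing $\psi$, tying the lemma visibly to the Birkhoff--James machinery of Section~\ref{prelim}. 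The paper's version is shorter and more conceptual but leaves the extremizer implicit. Your handling of the degenerate case $\|E\|_p=0$ is also a point the paper's chain of equalities glosses over.
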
 
  
 \begin{proof} 
By an elementary duality theorem of functional analysis, with respect to the pairing
\[
       \langle f, g \rangle =  \int_0^{2\pi} f(e^{i\theta}) \overline{g(e^{i\theta})}\,\frac{d\theta}{2\pi},
\]
we have
\begin{align}
     \| q_{n,p}[f]f -1\|_p  &=  \inf \left\{ \|Qf -1\|_p:\ Q \in \mathscr{P}_n  \right\} \nonumber \\
       &= \mbox{dist}_{H^p}(1, \mathscr{P}_n f) \nonumber \\
       &= \mbox{dist}_{L^p}(1, \mathscr{P}_n f) \label{curlyfries} \\
       &= \|1\|_{[(\mathscr{P}_n f)^{\perp}]^*}  \label{chocolateshake}\\
       &= \sup \left\{ \frac{| \langle  \psi, 1 \rangle  |}{\|\psi\|_{L^q}}:\ \psi \in (\mathscr{P}_n f)^{\perp}\setminus\{0\}  \right\} \nonumber\\
       &= \sup \left\{ \frac{|\psi_0|}{\|\psi\|_{L^q}}:\ \psi \in (\mathscr{P}_n f)^{\perp}\setminus\{0\}  \right\} \nonumber \\
       &= \Big[\inf\left\{\|\psi/\psi_0\|_{L^q}:\ \psi \in (\mathscr{P}_n f)^{\perp}\setminus\{0\}\right\}   \Big]^{-1} \nonumber \\
       &= \Big[ \inf \left\{ \|\psi\|_{L^q}   :\   \psi \in (\mathscr{P}_n f)^{\perp},\ \psi_0 = 1  \right\} \Big]^{-1} \nonumber \\
       &= \Big[ \inf \left\{ \|\psi\|_{L^q}   :\   \psi \in L^q,\ \psi_0 = 1, \langle z^k f, \psi \rangle =0\ \forall 0 \leq k \leq n  \right\} \Big]^{-1}. \nonumber 
\end{align}
\end{proof}

\begin{Remark}

The reason to move to $L^p$ in line \eqref{curlyfries} is that the dual of $L^p$ is $L^q$.  If we stick with the norm in $H^p$, then (caution!) the relevant dual space is the quotient space $L^q/H^q$, rather than $H^q$.  These spaces are isometrically isomorphic only when $p=2$.  

In line \eqref{chocolateshake}, we mean ``the norm of the unit constant function, viewed as a bounded linear functional on the annihilator of the subspace spanned by $f\mathscr{P}_n$.''
\end{Remark}

We first apply this duality to provide a lower bound for the OPA error in the case that we are approximating a polynomial with zeros in the disk. 
\begin{Proposition}\label{poly-lower}
Suppose $f$ is a polynomial
\[
     f(z) = (z - w_1) (z - w_2)\cdots(z - w_d),
\]
with the roots being distinct, nonzero and contained inside $\mathbb{D}$. Then, for $1<p<\infty$ and $\lambda := q_{0,p}[f]$, we have
\[
     \|1 - \lambda f\|_p  \geq  1 - |w_1 w_2\cdots w_d|.
\]
\end{Proposition}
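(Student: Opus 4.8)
The plan is to use the duality formula from Lemma~\ref{duality} with $n=0$, which says that
\[
\|1 - \lambda f\|_p = \left[ \inf\left\{ \|\psi\|_{L^q} : \psi \in L^q,\ \psi_0 = 1,\ \langle f, \psi\rangle = 0 \right\}\right]^{-1}.
\]
To obtain the \emph{lower} bound on the left-hand side, I must produce an \emph{upper} bound on that infimum, and since the infimum is over a constrained set, it suffices to exhibit \emph{one} admissible competitor $\psi \in L^q$ satisfying $\psi_0 = 1$ and $\langle f, \psi\rangle = 0$ whose norm is at most $1/(1 - |w_1\cdots w_d|)$.

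First I would look for a natural candidate adapted to the zero structure of $f$. Since $f(z) = \prod_{j=1}^d (z - w_j)$ with all $w_j \in \mathbb{D}\setminus\{0\}$, a canonical object attached to these zeros is the finite Blaschke product $B(z) = \prod_{j=1}^d \frac{w_j - z}{1 - \overline{w_j} z}$ (or a scalar multiple thereof), which has modulus $1$ on $\mathbb{T}$ and whose Fourier/Taylor structure encodes the $w_j$. The orthogonality constraint $\langle f, \psi\rangle = \int_\T f \overline{\psi}\, dm = 0$ is an annihilation condition, and the key observation is that $f$ shares its zeros with the numerator of $B$; this is exactly the feature that lets one kill the pairing while controlling the constant term. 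Concretely, I expect the right competitor to be $\psi = c\, z\, \overline{B}$ or a closely related modification, chosen so that (i) $\overline{\psi}$ pairs to zero against $f$ because the unimodular factor converts $f$ into something orthogonal to the constants after conjugation, and (ii) the normalization constant $c$ needed to force $\psi_0 = 1$ is exactly controlled by $\prod |w_j|$.

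The main computation will be extracting $\psi_0 = \int_\T \psi\, dm$ for the candidate and showing it equals a known quantity related to $|w_1\cdots w_d|$. The crucial numerical fact I expect to invoke is that the constant term of $B$ (equivalently, $B(0)$) equals $\prod_{j=1}^d w_j$ up to sign, so $|B(0)| = |w_1\cdots w_d|$; this is precisely where the product of the moduli enters. Since $|\psi| = |c|$ a.e.\ on $\T$ (because $|B| = 1$ there), the $L^q$ norm is simply $\|\psi\|_{L^q} = |c|$, and the normalization $\psi_0 = 1$ forces $|c| = 1/|1 - B(0)|$ or a comparable expression; bounding $|1 - B(0)| \geq 1 - |B(0)| = 1 - |w_1\cdots w_d|$ by the reverse triangle inequality then yields $\|\psi\|_{L^q} \leq 1/(1 - |w_1\cdots w_d|)$, and inverting through the duality formula gives the claim.

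The step I expect to be the genuine obstacle is pinning down the \emph{exact} competitor and verifying both constraints simultaneously: the unimodular modulus makes the $L^q$ norm trivial to compute, so all the difficulty concentrates in arranging $\langle f, \psi\rangle = 0$ while keeping $\psi_0$ nonzero and explicitly computable. I would verify the annihilation by writing $\langle f, \psi \rangle$ as a contour/Fourier integral and using that multiplying $f$ by the conjugate unimodular factor moves its spectrum into $z\overline{H^p}$ (i.e.\ produces a function with vanishing $0$-th Fourier coefficient), so that the pairing with the constant-adjusted competitor vanishes. If a single Blaschke-type $\psi$ does not cleanly satisfy $\psi_0 = 1$, the fallback is to take $\psi$ of the form (unimodular function)$\,\times\,$(affine correction) and solve the two linear conditions $\psi_0 = 1$, $\langle f,\psi\rangle = 0$ explicitly, at the cost of a slightly larger but still $\prod|w_j|$-controlled norm.
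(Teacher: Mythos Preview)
Your overall framework is right and matches the paper: invoke Lemma~\ref{duality} with $n=0$ and exhibit a single admissible $\psi\in L^q$ built from the Blaschke product $B(z)=\prod_{j=1}^d\frac{w_j-z}{1-\overline{w_j}z}$. The gap is in the choice of competitor.

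Your primary candidate $\psi=c\,z\,\overline{B}$ does \emph{not} satisfy $\langle f,\psi\rangle=0$. Indeed $\overline{\psi}=\overline{c}\,\overline{z}\,B$ on $\T$, so $\langle f,\psi\rangle=\overline{c}$ times the coefficient of $z$ in the analytic function $fB$; for $d=1$ with $f(z)=z-w$ one computes this coefficient to be $w(2-|w|^2)\neq 0$. If instead you try $\psi=c\,\overline{z}\,B$ (which \emph{does} annihilate $f$, since $f\overline{B}=(-1)^d\prod(1-\overline{w_j}z)$ on $\T$), then $\psi_0=c\,\widehat{B}(1)=c\,B'(0)$, not $c(1-B(0))$; and $B'(0)$ can vanish (e.g.\ $d=2$, $w_2=-w_1$), so this candidate cannot be normalized in general. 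In short, the heuristic that ``$\psi_0=1$ forces $|c|=1/|1-B(0)|$'' does not come out of either unimodular candidate, and the insistence on $|\psi|\equiv|c|$ is precisely what blocks you.

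The paper's competitor is \emph{not} unimodular: it is of the form $\psi=\alpha B+\beta$. The key observation you are missing is the partial-fraction decomposition
\[
B(z)=\frac{a}{\overline{w_1\cdots w_d}}+\sum_{j=1}^d c_j\,\Lambda_{w_j}(z),\qquad \Lambda_w(z)=\frac{1}{1-\overline{w}z},
\]
valid because $\deg(\text{numerator})=\deg(\text{denominator})$ and the poles $1/\overline{w_j}$ are simple. Since $\langle f,\Lambda_{w_j}\rangle=f(w_j)=0$, the sum of kernels annihilates $f$, so one takes $\psi:=B-\dfrac{a}{\overline{w_1\cdots w_d}}$. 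The normalization $\psi_0=1$ fixes $a$, and then the triangle inequality (not unimodularity) gives
\[
\|\psi\|_q\le \|B\|_q+\Big|\frac{a}{\overline{w_1\cdots w_d}}\Big|=|a|+\frac{|a|}{|w_1\cdots w_d|}=\frac{1}{1-|w_1\cdots w_d|},
\]
after a short computation. Equivalently, one may take $\psi=\dfrac{1-\overline{B(0)}B}{1-|B(0)|^2}$ and check both constraints directly using $f\overline{B}=(-1)^d\prod(1-\overline{w_j}z)$ on $\T$; the triangle inequality again gives $\|\psi\|_q\le 1/(1-|B(0)|)$. Either way, the competitor is ``Blaschke plus constant,'' and your fallback ``unimodular $\times$ affine'' does not naturally produce this.
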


\begin{proof}
The space of functions in $L^q$ which annihilate $f$ contains functions of the form
\[
     c_1 \Lambda_{w_1} + c_2 \Lambda_{w_2} + \cdots + c_d \Lambda_{w_d},
\]
where $\Lambda_{w_j}$ denotes the point evaluation functional (or Szeg\"{o} kernel) at the point $w_j \in \D$. 
Thus, by the Lemma \ref{duality}, we have
\[
     \| 1 - \lambda f\|_p  =  \left[ \inf \|\psi\|_q   \right]^{-1},
\]
where the infimum is over $\psi\in L^q$ satisfying $\langle f,\psi \rangle = 0$ and $\psi_0 = 1$.

Let 
\[
     B(z) = a \prod_{k=1}^{d} \frac{w_k - z}{1 - \overline{w}_k z},
\]
a constant multiple of the Blaschke product with the same zeros as $f$.
Its numerator has leading term $\pm a z^d$, while the denominator has leading term $\pm \overline{w_1 w_2\cdots w_d} z^d$ (with matching signs).  Thus long division followed by partial fractions expansion results in an expression of the form
\[
     B(z) = \frac{a}{\ \overline{w_1 w_2\cdots w_d}\ } +  c_1 \Lambda_{w_1} + c_2 \Lambda_{w_2} + \cdots + c_d \Lambda_{w_d}.
\]

Evaluating this equation at $z=0$ tells us that 
\[
    a w_1 w_2\cdots w_d = \frac{a}{\ \overline{w_1 w_2\cdots w_d}\ } + c_1 + c_2 + \cdots + c_d.
\]

This suggests making the specific choice of
\[
   \psi(z) =    c_1 \Lambda_{w_1} + c_2 \Lambda_{w_2} + \cdots + c_d \Lambda_{w_d}
\]
with the coefficients determined above.
The requirement of  $\psi(0) = 1$ therefore gives
\[
     a w_1 w_2\cdots w_d = \frac{a}{\ \overline{w_1 w_2\cdots w_d}\ } + 1,
\]
which will furnish the value of $a$, namely,
\[
     a = \left[ w_1 w_2\cdots w_d -  \frac{1}{\ \overline{w_1 w_2\cdots w_d}\ } \right]^{-1}.
\]

Finally, an application of the triangle inequality yields
\begin{align*}
      \| 1 - \lambda f\|_p  &\geq \|\psi\|_q^{-1} \\
         &\geq \left\|B - \frac{a}{\ \overline{w_1 w_2\cdots w_d}\ } \right\|_q^{-1} \\
         &\geq \left[ \|B\|_q + \left\|\frac{a}{\ \overline{w_1 w_2\cdots w_d}\ } \right\|_q  \right]^{-1} \\
         &= \left[  |a| + \left| \frac{a}{w_1 w_2\cdots w_d}\right| \right]^{-1}\\
         &=  \frac{1 - |f(0)|^2}{1 + | f(0)|}\\
         & = 1 - |f(0)| \\
         &= 1 - |w_1 w_2\cdots w_d|.
\end{align*}
\end{proof}

\begin{Remark}
    The above result holds for any function $f$  vanishing at the points $w_1,\ldots, w_d$.  Further, by Theorem \ref{cont-f}, the result also extends to any infinite Blaschke product.
\end{Remark}

Let us now use duality to further investigate OPA errors for more general functions.

\begin{Proposition}\label{psi-lower}
Let $1<p<\infty$, $1/p +1/q =1$, $n \in \mathbb{N}$, and $f \in H^p$ with $f(0) = 1$.  Then
\[
\|q_{n-1,p}[f]f-1\|_p \geq   \frac{1}{\|1+\psi_1 z + \psi_2 z^2 + \cdots+ \psi_n z^n \|_q},
\]
where the coefficients $\psi_k$, $1 \leq k \leq n$, satisfy the matrix equation
\[
\left[
\begin{array}{ccccc}
f_1 & f_2 & f_3 & \cdots & f_n \\
f_0 & f_1 & f_2 & \cdots & f_{n-1}\\
0   &  f_0  & f_1 & \cdots & f_{n-2}\\
\vdots & \vdots & \vdots & \ddots & \vdots \\
0 & 0 & 0 & \cdots & f_1
\end{array}
\right]
\left[
\begin{array}{c}
\psi_1 \\ \psi_2 \\ \psi_3 \\ \vdots  \\ \psi_n   \end{array}
\right]
=
\left[
\begin{array}{r}
-1\\ 0 \\ 0 \\ \vdots \\ 0
\end{array}
\right].
\]
\end{Proposition}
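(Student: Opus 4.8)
The plan is to invoke the duality formula of Lemma \ref{duality}, applied with $n-1$ in place of $n$, which expresses the error $\|q_{n-1,p}[f]f - 1\|_p$ as the reciprocal of
\[
\inf\left\{\|\psi\|_{L^q} : \psi \in L^q,\ \hat\psi(0) = 1,\ \langle z^k f, \psi\rangle = 0 \text{ for } 0 \le k \le n-1\right\}.
\]
Because passing to the reciprocal reverses inequalities, any single feasible $\psi$ furnishes the lower bound $\|q_{n-1,p}[f]f-1\|_p \ge 1/\|\psi\|_{L^q}$; thus it suffices to produce one admissible competitor and control its $L^q$ norm. I would search for $\psi$ among analytic polynomials of degree at most $n$ with constant term $1$, say $\psi(z) = 1 + c_1 z + \cdots + c_n z^n$, so that the normalization $\hat\psi(0)=1$ holds automatically.

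The second step is to translate the orthogonality constraints into the displayed linear system. Writing $f = \sum_j f_j z^j$ and using orthonormality of $\{z^m\}$ on $\T$, a direct computation gives $\langle z^k f, \psi\rangle = \sum_{m \ge k} f_{m-k}\,\overline{c_m}$. Setting this to zero for $0 \le k \le n-1$ and isolating the $m=0$ term (present only when $k=0$, where it equals $f_0 \overline{c_0} = 1$ since $f_0 = f(0) = 1$ and $c_0 = 1$) produces exactly the Hessenberg system in the statement, the lone $-1$ on the right-hand side arising from this constant term. The one point demanding care is that the unknowns delivered by this computation are the conjugates $\overline{c_m}$; comparing with the displayed system shows one should set $\overline{c_m} = \psi_m$, i.e. take the competitor to be $\psi(z) = 1 + \overline{\psi_1}z + \cdots + \overline{\psi_n}z^n$, where $(\psi_1, \ldots, \psi_n)$ solves the stated matrix equation.

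Finally I would record that $\|\psi\|_{L^q}$ equals the quantity appearing in the proposition. Indeed, replacing each coefficient by its conjugate corresponds on $\T$ to the substitution $\theta \mapsto -\theta$, under which the $L^q$ norm is invariant; hence
\[
\|1 + \overline{\psi_1}z + \cdots + \overline{\psi_n}z^n\|_q = \|1 + \psi_1 z + \cdots + \psi_n z^n\|_q,
\]
and the asserted lower bound follows. I expect the main obstacle to be precisely this conjugation bookkeeping---keeping straight that the inner-product constraints yield equations for $\overline{c_m}$ rather than $c_m$, and verifying the norm is unaffected---together with confirming that the coefficient matrix (upper Hessenberg with unit subdiagonal, since $f_0 = 1$) is the one displayed, so that the presupposed solution $(\psi_1, \ldots, \psi_n)$ exists and yields a genuine competitor.
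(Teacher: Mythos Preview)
Your proof is correct and follows essentially the same approach as the paper: exhibit the polynomial $\psi$ as an admissible competitor in Lemma~\ref{duality} (applied with $n-1$), so that $\|q_{n-1,p}[f]f-1\|_p \ge 1/\|\psi\|_q$. In fact your handling of the conjugation bookkeeping---observing that the constraints $\langle z^k f,\psi\rangle=0$ determine $\overline{c_m}$ rather than $c_m$, and that conjugating all coefficients leaves the $L^q$ norm unchanged---is more careful than the paper's own terse argument, which simply asserts that $\psi=1+\psi_1 z+\cdots+\psi_n z^n$ satisfies the required orthogonality.
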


\begin{proof}
It suffices to check that the function 
\[
\psi(z) = 1+\psi_1 z + \psi_2 z^2 + \cdots+ \psi_n z^n
\]
satisfies the hypotheses of Lemma \ref{duality}. That is, for $0 \le k \le n$, that 
\[
\langle z^k f, \psi \rangle =0.
\]
This is ensured precisely by the linear system in the statement of the proposition. 
\end{proof}

With further calculation, the approach in the previous proposition can be used to show the following:

\begin{Proposition}\label{snail}
     Let $1<p<\infty$, $1/p +1/q =1$, $n \in \mathbb{N}$, and $f \in H^p$ with $f(0) = 1$.   Let
     \[
         \frac{1}{f(z)} = 1 + g_1 z + g_2 z^2 + \cdots
     \]
     be the power series of $1/f$ about the origin.
     
   Then
  \[
     \|q_{n-1,p}[f]f-1\|_p \geq  \frac{|g_n|}{\left\|1+g_1 z + g_2 z^2 +\cdots+g_n z^n\right\|_q}.
   \]
\end{Proposition}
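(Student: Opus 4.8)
The plan is to build a single admissible dual function $\psi \in L^q$ that satisfies the hypotheses of Lemma~\ref{duality} for the degree $n-1$ problem (so we need $\psi_0 = 1$ and $\langle z^k f, \psi \rangle = 0$ for $0 \le k \le n-1$), and then apply the duality formula together with the triangle inequality to peel off the top coefficient $g_n$. The natural candidate comes directly from the power series of $1/f$: since $f \cdot (1/f) = 1$, the partial sum $P_n(z) := 1 + g_1 z + \cdots + g_n z^n$ satisfies $f(z) P_n(z) = 1 + (\text{terms of order} \ge n+1)$, because the coefficients $g_1,\ldots,g_n$ are defined precisely to annihilate the intermediate powers of $z$ in the product. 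I would exploit this by taking $\psi = \overline{P_n}$ evaluated on $\T$ (i.e. the function whose Fourier coefficients are the complex conjugates $\overline{g_k}$ placed at frequencies $0,1,\ldots,n$), so that the pairing $\langle z^k f, \psi\rangle = \int_\T z^k f \overline{\psi}\,dm$ picks out the $(k)$-th Taylor coefficient of $f P_n$.

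First I would verify the orthogonality conditions: writing $f(z)P_n(z) = 1 + \sum_{j \ge n+1} c_j z^j$, the fact that the Fourier coefficient of $f\overline{\psi}$ at frequency $-k$ (for $0 \le k \le n-1$) vanishes is exactly the statement that the $z^k$ coefficient of $fP_n$ is $\delta_{k,0}$ — which holds for $0 \le k \le n$ by construction of the $g_j$. Thus in fact $\langle z^k f, \psi\rangle = 0$ for all $0 \le k \le n-1$ (indeed up to $k = n-1$ is what we need), and $\langle f, \psi\rangle = \psi_0 \cdot (\text{const})$; I would normalize so that $\psi_0 = 1$, which is immediate since the constant term of $P_n$ is $1$. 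Having confirmed $\psi$ is admissible, Lemma~\ref{duality} gives $\|q_{n-1,p}[f]f - 1\|_p \ge \|\psi\|_q^{-1} = \|P_n\|_q^{-1}$, since $\|\overline{P_n}\|_q = \|P_n\|_q$ on the circle.

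That bound, however, is $\|1 + g_1 z + \cdots + g_n z^n\|_q^{-1}$, which is \emph{not} the claimed inequality — the claim has an extra factor $|g_n|$ in the numerator. The discrepancy is resolved by choosing a \emph{better} admissible $\psi$: rather than the full partial sum, I would look for the dual function obtained by subtracting off the lower-order part, so that only the top coefficient survives in the numerator after applying the triangle inequality, in the same spirit as the Blaschke-product manipulation in the proof of Proposition~\ref{poly-lower}. Concretely, I expect the correct admissible function to be a rescaling of $P_n$ by $1/g_n$ (or, dually, the function $z^n + \overline{g_1}/\overline{g_n}\, z^{n-1} + \cdots$) arranged so that the single coefficient $g_n$ factors out of $\|\psi\|_q^{-1}$. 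I would then split $\|P_n\|_q \le |g_n| \cdot \|(1/g_n)P_n\|_q$ and track the normalization to land exactly on $|g_n|/\|1 + g_1 z + \cdots + g_n z^n\|_q$.

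The main obstacle is pinning down precisely \emph{which} normalization of $\psi$ produces the factor $|g_n|$ in the numerator rather than in the denominator, since the naive choice $\psi = \overline{P_n}$ gives the weaker bound without the $|g_n|$. The bookkeeping here is delicate: one must ensure that the chosen $\psi$ still satisfies $\psi_0 = 1$ (the normalization forced by Lemma~\ref{duality}) while simultaneously isolating $g_n$, and I suspect the resolution is that the admissible function should be built so that its $L^q$-norm is $\|P_n\|_q/|g_n|$ after enforcing $\psi_0 = 1$ — the factor $|g_n|$ entering through the relationship between the top-degree constraint in the linear system of Proposition~\ref{psi-lower} and the Taylor coefficient $g_n$ of $1/f$. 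Everything else (admissibility, the conjugation-invariance of the $L^q$ norm on $\T$, and the final application of Lemma~\ref{duality}) is routine once this normalization is correctly identified.
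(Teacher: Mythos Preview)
Your overall strategy---build an explicit $\psi$ and invoke Lemma~\ref{duality}---is exactly the paper's, but your candidate $\psi$ is not admissible and your verification contains a computational slip. With $\psi(z) = \sum_{k=0}^n \overline{g_k}\,z^k$ you compute $\langle z^k f,\psi\rangle = \int_\T z^k f\,\overline{\psi}\,dm$, which is the coefficient of $z^{-k}$ in $f\overline{\psi}$; since $\overline{\psi}$ has Fourier coefficients $g_l$ at frequency $-l$, this equals $\sum_{j\ge 0} f_j\,g_{j+k}$, \emph{not} the convolution $\sum_{j} f_j\,g_{k-j}$ that gives the $z^k$ Taylor coefficient of $fP_n$. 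These are different sums, and the former is not zero in general: already for $f(z)=1+2z$ one has $g_j=(-2)^j$ and $\sum_j f_j g_j = 1 - 4 = -3$, so your $\psi$ fails the $k=0$ orthogonality condition $\langle f,\psi\rangle = 0$. Consequently the bound $1/\|P_n\|_q$ you derive is not justified, and the later attempt to ``peel off $g_n$ by the triangle inequality'' is chasing the wrong discrepancy.

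The admissible $\psi$ you are looking for is the \emph{reversed} polynomial: take $\psi_k = \overline{g_{n-k}}/\overline{g_n}$ (this is essentially the ``$z^n + \overline{g_1}/\overline{g_n}\,z^{n-1}+\cdots$'' you mention in passing, but normalized so $\psi_0=1$). Then $\langle z^k f,\psi\rangle = \tfrac{1}{g_n}\sum_j f_j\,g_{(n-k)-j}$, which \emph{is} the $z^{n-k}$ coefficient of $f\cdot(1/f)$ and hence vanishes for $0\le k\le n-1$. The factor $|g_n|$ is not a triangle-inequality artifact; it is forced by the normalization $\psi_0=1$ applied to the reversal, after which the change of variable $z\mapsto\bar z$ on $\T$ gives $\|\psi\|_q = \|P_n\|_q/|g_n|$ and hence the stated bound. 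The paper reaches the same reversed polynomial by explicitly solving the Toeplitz system of Proposition~\ref{psi-lower} and identifying the determinant $C$ with $-g_n$; once you recognize the reversal, your direct route is shorter and avoids the matrix algebra entirely.
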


\begin{proof}

Let us begin with the matrix equation from Proposition \ref{psi-lower}:
\[
\left[
\begin{array}{ccccc}
f_1 & f_2 & f_3 & \cdots & f_n \\
f_0 & f_1 & f_2 & \cdots & f_{n-1}\\
0   &  f_0  & f_1 & \cdots & f_{n-2}\\
\vdots & \vdots & \cdots & \ddots & \vdots \\
0 & 0 & 0 & \cdots & f_1
\end{array}
\right]
\left[
\begin{array}{c}
\psi_1 \\ \psi_2 \\ \psi_3 \\ \vdots  \\ \psi_n   \end{array}
\right]
=
\left[
\begin{array}{r}
-1\\ 0 \\ 0 \\ \vdots \\ 0
\end{array}
\right].
\]
There is no harm in multiplying both sides of the equation on the left by the elementary permutation matrix
\[
\left[
\begin{array}{ccccc}
0 & 0 & 0 & \cdots & 1 \\
1 & 0 & 0 & \cdots & 0\\
0 &  1  & 0 & \cdots & 0\\
\vdots & \vdots & \vdots & \ddots & \vdots \\
0 & 0 & 0 & \cdots & 0
\end{array}
\right]
\]
(the next to last entry of the bottom row is 1), which has the effect of changing the equation to
\[
    \left[ \begin{array}{ccccc} 
        f_0 & f_1 & f_2 & \cdots & f_{n-1}\\
        0   &  f_0  & f_1 & \cdots & f_{n-2}\\
        0 &  0 & f_0 & \cdots & f_{n-3} \\
        \vdots & \vdots & \vdots & \ddots & \vdots \\
        0 & 0 & 0 & \cdots & f_1 \\
        f_1 & f_2 & f_3 & \cdots & f_n 
   \end{array}   \right]
   \left[\begin{array}{c} \psi_1 \\ \psi_2 \\ \psi_3 \\ \psi_4 \\ \vdots  \\ \psi_n   \end{array}\right]
   =
   \left[\begin{array}{r} 0\\ 0 \\ 0 \\ 0 \\ \vdots \\ -1 \end{array}\right].
\]

By successively subtracting multiples of the other rows, the bottom row can be placed in the form
\[
     \left[\begin{array}{ccccc} 0 & 0 & 0 & \cdots & C\end{array}\right]
\]
for some constant $C$, which could be zero.  In fact, recalling that $f_0 = 1$, we see that $C$ must be given by
\[
C =
\det
\left[
\begin{array}{ccccc}
f_1 & f_2 & f_3 & \cdots & f_n \\
f_0 & f_1 & f_2 & \cdots & f_{n-1}\\
0   &  f_0  & f_1 & \cdots & f_{n-2}\\
\vdots & \vdots & \vdots & \ddots & \vdots \\
0 & 0 & 0 & \cdots & f_1
\end{array}
\right].
\]

Furthermore, the sequence of row operations to diagonalize the matrix leaves the right side unchanged as
\[
      \left[\begin{array}{ccccc} 0 & 0 & 0 & \cdots & -1\end{array}\right]^T.
\]

Assuming that $C \neq 0$, and again recalling that $f_0 = 1$, our matrix equation can be written as
\[
    \left[ \begin{array}{ccccc} 
        f_0 & f_1 & f_2 & \cdots & f_{n-1}\\
        0   &  f_0  & f_1 & \cdots & f_{n-2}\\
        0 &  0 & f_0 & \cdots & f_{n-3} \\
        \vdots & \vdots & \vdots & \ddots & \vdots \\
        0 & 0 & 0 & \cdots & f_0 
   \end{array}   \right]
   \left[\begin{array}{c} \psi_1 \\ \psi_2 \\ \psi_3 \\  \vdots  \\ \psi_n   \end{array}\right]
   =
   \left[\begin{array}{r} 0\\ 0 \\ 0 \\ \vdots \\ -1/C \end{array}\right].
\]

The inverse of the transposed (Toeplitz) matrix on the left is simply
\[
    \left[ \begin{array}{ccccc} 
        g_0 & g_1 & g_2 & \cdots & g_{n-1}\\
        0   &  g_0  & g_1 & \cdots & g_{n-2}\\
        0 &  0 & g_0 & \cdots & g_{n-3} \\
        \vdots & \vdots & \vdots & \ddots & \vdots \\
        0 & 0 & 0 & \cdots & g_0 
   \end{array}   \right],
\]
where $g(0)=1$ and $g(z) =  g_0 + g_1 z + g_2 z^2+\cdots$ is the Taylor expansion of $1/f(z)$, valid for some disk centered at the origin.
The conclusion is that 
\[
       \psi_k = -g_{n-k}/C\ \ \forall 0\leq k\leq n-1.
\]

Our next challenge is to find an analytical meaning for the constant $C$.  But notice that the row operations needed to clear entries from the bottom row of
\begin{equation}\label{milkduds}
 \left[ \begin{array}{ccccc} 
        f_0 & f_1 & f_2 & \cdots & f_{n-1}\\
        0   &  f_0  & f_1 & \cdots & f_{n-2}\\
        0 &  0 & f_0 & \cdots & f_{n-3} \\
        \vdots & \vdots & \vdots & \ddots & \vdots \\
        0 & 0 & 0 & \cdots & f_1 \\
        f_1 & f_2 & f_3 & \cdots & f_n 
   \end{array}   \right]
\end{equation}
would (suitably modified) similarly clear the second through the last entries from the top row.  Performing all of these (suitably modified) row operations on the identity matrix would have to result in
\[
       \left[ \begin{array}{ccccc} 
        1 & g_1 & g_2 & \cdots &  g_{n-1}\\
        0   &  1  & 0 & \cdots &  0\\
        0 &  0 & 1 & \cdots   & 0 \\
        \vdots & \vdots & \vdots & \ddots & \vdots \\
        0 & 0 & 0 & \cdots & 1
   \end{array}   \right]
\]

Then following carefully what operations are correspondingly performed on the last column in \eqref{milkduds}, we conclude that
\[
     C = f_n + g_1 f_{n-1} + g_2 f_{n-2} + \cdots+ g_{n-1} f_1 = -g_n.
\]

Finally, note that 
\begin{align*}
      \left\|1+ \frac{g_{n-1}}{g_n} z + \frac{g_{n-2}}{g_n} z^2 + \cdots+ \frac{1}{g_n} z^n \right\|_q &= 
      \frac{1}{|g_n|}\left\|g_n+ g_{n-1} z + g_{n-2} z^2 + \cdots+ z^n \right\|_q\\
      &=   \frac{1}{|g_n|}\left\|z^{-n}(g_n+ g_{n-1} z + g_{n-2} z^2 + \cdots+ z^n) \right\|_q\\     
      &=  \frac{1}{|g_n|}\left\|1+g_1 z^{-1} + g_2 z^{-2} +\cdots+g_n z^{-n}\right\|_q\\
      &=  \frac{1}{|g_n|}\left\|1+{g}_1 z + {g}_2 z^{2} +\cdots+{g}_n z^{n}\right\|_q,
\end{align*}
where in the last step, the change of variable $\theta \longmapsto -\theta$, for $z = e^{i\theta}$, leaves the norm integral net unchanged.
\end{proof}

We will provide an improvement to the above proposition, but we must first consider the problem of finding $G \in zH^p$ such that
\[
    \left\|\overline{G} + F\right\|_p
\]
is minimized.  Duality tells us that (as we continue to mark extremal functions with (*))
\begin{align*}
     \left\|\overline{G^*} + F\right\|_p &= \sup\left\{ \frac{|\langle F, \psi\rangle|   }{\|\psi\|_q  }:\ \psi \in H^q\right\}\\
       &= \frac{|\langle F, \psi^*\rangle|   }{\|\psi^*\|_q  }\\
       &= |\langle F, \psi^*\rangle|/\inf\left\{\| \psi^* + K\|_q:\ K \in \overline{zH^q}\right\}.
\end{align*}
Once again, we are up against the dual of $H^p$ being isomorphic to $H^q$, but not isometrically.

Nonetheless, we must consider the metric projection of $F$ onto the subspace $\overline{zH^p}$.  Notice that $(\overline{G^*} + F)^{\langle p-1\rangle}$ annihilates any negative frequencies.  Therefore, there exists $h \in H^q$ such that
\[
(\overline{G^*} + F)^{\langle p-1\rangle} = h,
\]
and thus, taking a $\langle q-1 \rangle$ power, we have
\[
|h|^{q-2} h = \overline{G^*} + F.
\] 
In turn, we see that finding $G^*$ amounts to solving the above highly unpleasant functional equation.

Let us record this in the following result, where we write $P_+$ for the Riesz projection, given by 
\[
       \sum_{k = -\infty}^{\infty} c_k z^k  \longmapsto  \sum_{k = 0}^{\infty} c_k z^k, 
\]
which is bounded from $L^p \to H^p$.  

\begin{Proposition}
   Let $1<p<\infty$ and $1/p+1/q = 1$. Suppose $h \in H^q$, and define $F := P_+\overline{h^{\langle q-1\rangle}}$.  Then
   \[
         \inf \left\{\|F + \overline{G}\|_p:\ G \in zH^p \right\}
   \]
   is attained by taking $F + \overline{G} = \overline{h^{\langle q-1\rangle}}$.  In this case, the value of the infimum is given by
   \[
         \inf \left\{\|F + \overline{G}\|_p:\ G \in zH^p \right\} = \|h\|_q^{q-1}.
   \]
\end{Proposition}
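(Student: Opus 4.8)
The plan is to exhibit the extremal function explicitly, verify that it is feasible (lies in the affine set $\{F + \overline{G} : G \in zH^p\}$), and then certify optimality through Birkhoff--James orthogonality. Write $\psi := \overline{h^{\langle q-1\rangle}} = |h|^{q-2}h$. First I would record that $\psi \in L^p$ with exactly the claimed norm: since $|\psi| = |h|^{q-1}$ and the identity $\tfrac1p + \tfrac1q = 1$ gives $(q-1)p = q$, we have
\[
\|\psi\|_p^p = \int_\T |h|^{(q-1)p}\,dm = \int_\T |h|^q\,dm = \|h\|_q^q,
\]
so that $\|\psi\|_p = \|h\|_q^{q-1}$. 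This already pins down the stated value of the infimum, provided $\psi$ is both feasible and optimal.

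For feasibility, note that by definition $F = P_+\psi$, so $\psi - F = (I - P_+)\psi$ carries only strictly negative frequencies. Since $P_+$ is bounded on $L^p$, we have $(I-P_+)\psi \in L^p$, and its conjugate $G := \overline{(I-P_+)\psi}$ has only strictly positive frequencies and again lies in $L^p$; hence $G \in zH^p$. Thus $\psi = F + \overline{G}$ is attained within the feasible set, as required.

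The heart of the matter is optimality. Minimizing $\|F + \overline{G}\|_p$ over $G \in zH^p$ is the same as minimizing $\|\psi + \eta\|_p$ over $\eta \in \overline{zH^p}$, and by the nearest-point characterization this minimum is attained at $\eta = 0$ precisely when $\psi \perp_p \overline{zH^p}$ (using that $\perp_p$ is linear in its second argument for $1<p<\infty$). To verify this through the James criterion \eqref{BJp}, I would compute the single decisive quantity $\psi^{\langle p-1\rangle} = |\psi|^{p-2}\overline{\psi}$. Substituting $|\psi| = |h|^{q-1}$ and $\overline{\psi} = |h|^{q-2}\overline{h}$ produces the exponent $(q-1)(p-2) + (q-2)$ on $|h|$, which simplifies—via $p+q = pq$—to zero, so that $\psi^{\langle p-1\rangle} = \overline{h}$. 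The orthogonality condition then collapses to showing $\int_\T \overline{h}\,\overline{G}\,dm = 0$ for every $G \in zH^p$; since $h \in H^q$ and $G \in zH^p$, the product $hG$ lies in $H^1$ and vanishes at the origin, so $\int_\T hG\,dm = \widehat{hG}(0) = 0$, and conjugating finishes the claim.

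With optimality established, the infimum equals $\|\psi\|_p = \|h\|_q^{q-1}$, completing the proof. I expect the only real obstacle to be bookkeeping: the exponent cancellation $(q-1)(p-2)+(q-2)=0$, which is exactly what forces $\psi^{\langle p-1\rangle} = \overline{h}$ and makes the James test degenerate into the trivial vanishing-constant-term identity. The remaining care is simply to confirm that all functions sit in the Lebesgue and Hardy spaces where $P_+$, the orthogonality test, and the $H^1$ product estimate are legitimate.
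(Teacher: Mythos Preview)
Your proof is correct and follows essentially the same route as the paper: the paper's discussion preceding the proposition derives the form of the minimizer from the Birkhoff--James orthogonality condition (arriving at $(\overline{G^*}+F)^{\langle p-1\rangle}\in \overline{H^q}$ and inverting via the $\langle q-1\rangle$ power), whereas you run the same computation in the forward direction, verifying feasibility and then checking $\psi\perp_p \overline{zH^p}$ through the identical algebraic identity $(\overline{h^{\langle q-1\rangle}})^{\langle p-1\rangle}=\overline h$. The core content---the exponent cancellation $(q-1)(p-2)+(q-2)=0$ and the resulting reduction of the James test to $\int_\T hG\,dm=0$ for $G\in zH^p$---is the same in both.
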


This warrants the following observation:

\begin{Proposition} For $1<p<\infty$, the set of images
$P_+\, \overline{(H^q)^{\langle q-1\rangle}}$ is dense in $H^p$.
\end{Proposition}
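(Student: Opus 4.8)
The plan is to prove the stronger assertion that $P_+\,\overline{(H^q)^{\langle q-1\rangle}}$ is in fact \emph{all} of $H^p$; density then follows a fortiori. The argument runs in the direction opposite to the preceding proposition: rather than starting from $h \in H^q$ and producing the associated $F$, I would fix an arbitrary $F \in H^p$ and manufacture a preimage $h$ by solving an extremal problem and reading off its orthogonality conditions.

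First I would fix $F \in H^p$ and examine the metric projection of $F$ onto the closed subspace $\overline{zH^p} \subseteq L^p$. Since $1<p<\infty$, $L^p$ is uniformly convex, and $\overline{zH^p}$ is closed (conjugation is isometric and $zH^p$ is closed in $H^p$), so there is a unique nearest point; write the resulting residual as $u := F + \overline{G^*}$ with $G^* \in zH^p$, the norm-minimal representative of the coset $F + \overline{zH^p}$. Two features of $u$ drive everything. First, because $G^* \in zH^p$ has spectrum in $\{1,2,\dots\}$, its conjugate $\overline{G^*}$ carries only strictly negative frequencies, so the Riesz projection recovers $P_+u = F$. Second, norm-minimality is equivalent to the Birkhoff--James orthogonality $u \perp_p \overline{zH^p}$.

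Next I would convert that orthogonality into a spectral statement. By James' criterion \eqref{BJp}, $u \perp_p \overline{zH^p}$ means $\int_\T u^{\langle p-1\rangle} v\,dm = 0$ for every $v \in \overline{zH^p}$; testing against $v = z^{-m}$ for $m \ge 1$ gives $\widehat{u^{\langle p-1\rangle}}(m) = 0$ for all $m \ge 1$. Since $u^{\langle p-1\rangle} \in L^q$ and has nonpositive spectrum, it lies in $\overline{H^q}$, so $h := \overline{u^{\langle p-1\rangle}}$ belongs to $H^q$. The remaining task is to verify that this $h$ reproduces $u$, that is $u = \overline{h^{\langle q-1\rangle}}$; combined with $P_+u = F$ this yields $F = P_+\overline{h^{\langle q-1\rangle}} \in P_+\,\overline{(H^q)^{\langle q-1\rangle}}$, and arbitrariness of $F$ closes the argument.

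I expect the only genuinely delicate point to be the pointwise algebraic identity $\overline{\bigl(\overline{u^{\langle p-1\rangle}}\bigr)^{\langle q-1\rangle}} = u$ a.e. This rests on the fact that the maps $\phi \mapsto \phi^{\langle p-1\rangle}$ and $\phi \mapsto \phi^{\langle q-1\rangle}$ are mutually inverse homeomorphisms between $L^p$ and $L^q$, a consequence of $(p-1)(q-1)=1$ (equivalently $pq = p+q$); one checks that the exponent of $|u|$ accumulated after applying $\langle p-1\rangle$, conjugating, and applying $\langle q-1\rangle$ collapses to $0$, returning $u$ exactly. Everything else---existence and uniqueness of the projection, the spectral reading of orthogonality, and the boundedness of $P_+$ on $L^p$---is standard for $1<p<\infty$. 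This establishes $P_+\,\overline{(H^q)^{\langle q-1\rangle}} = H^p$, and in particular density.
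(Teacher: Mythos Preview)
Your argument is correct and in fact proves the stronger statement $P_+\,\overline{(H^q)^{\langle q-1\rangle}} = H^p$, not merely density. The paper takes a different and considerably shorter route: it shows the annihilator of the image in $H^q$ is trivial. Namely, if $g \in H^q$ satisfies $\langle P_+\,\overline{h^{\langle q-1\rangle}}, g\rangle = 0$ for every $h \in H^q$, then (since $\overline{g}$ already kills negative frequencies, the projection can be dropped) $\int_\T \overline{h^{\langle q-1\rangle}}\,\overline{g}\,dm = 0$ for all $h$; specializing to $h=g$ gives $\|g\|_q^q = 0$, so $g=0$ and density follows by Hahn--Banach. Your approach is constructive and runs the preceding proposition in reverse: for an arbitrary $F$ you solve a nearest-point problem, read Birkhoff--James orthogonality as spectral vanishing of $u^{\langle p-1\rangle}$, and exploit the involution $(p-1)(q-1)=1$ to exhibit an explicit preimage $h$. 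What you gain is surjectivity (and a concrete recipe for $h$); what the paper's argument buys is brevity, at the cost of being nonconstructive and yielding only density.
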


\begin{proof}
Suppose $g \in H^q$ has the property that
\[
\left\langle P_+\, \overline{h^{\langle q-1\rangle}}, g \right\rangle = 0 
\]
for all $h \in H^q$.  Then
\begin{align*}
     0 &= \left\langle P_+\, \overline{h^{\langle q-1\rangle}}, g \right\rangle\\
       &= \int_{\mathbb{T}} P_+\, \overline{h^{\langle q-1\rangle}} \overline{g}\, dm\\
       &= \int_{\mathbb{T}} \overline{h^{\langle q-1\rangle}} \overline{g}\, dm.
\end{align*}
We are able to drop the projection in the last line since integration against $\overline{g}$ will annihilate the negative frequencies of $\overline{h^{\langle q-1\rangle}}$.
In particular, this must hold for $h=g$, hence $0 = \|g\|_q^q$.   This forces $g$ to be identically zero.
\end{proof}

In turn, we can make the following improvement to Proposition \ref{snail}.

\begin{Proposition}
 Let $1<p<\infty$, $1/p +1/q =1$, $n \in \mathbb{N}$, and $f \in H^p$ with $f(0) = 1$.   Let
     \[
         \frac{1}{f(z)} = 1 + g_1 z + g_2 z^2 + \cdots
     \]
     be the power series of $1/f$ about the origin.
     
   If $g_n \neq 0$, then
\[
     \|q_{n-1,p}[f]f-1\|_p  \geq  \frac{1}{\left\|h^{\langle p-1\rangle}\right\|_q},
\]
where $h\in H^p$ satisfies
\[
          P_+ \, \overline{h^{\langle p-1\rangle}} =  1+\frac{g_{n-1}}{g_n} z + \frac{g_{n-2}}{g_n} z^2 +\cdots+\frac{g_0}{g_n} z^n.
\]

\end{Proposition}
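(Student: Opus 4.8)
The plan is to combine the duality of Lemma~\ref{duality} with an extra degree of freedom in the choice of annihilating function that Proposition~\ref{snail} leaves on the table. Applying Lemma~\ref{duality} to the $(n-1)$-th OPA gives
\[
\|q_{n-1,p}[f]f-1\|_p = \Big[\inf\{\|\psi\|_q : \psi\in L^q,\ \psi_0 = 1,\ \langle z^kf,\psi\rangle = 0,\ 0\le k\le n-1\}\Big]^{-1},
\]
so it suffices to exhibit an admissible $\psi$ with $\|\psi\|_q \le \|h^{\langle p-1\rangle}\|_q$. First I would recall from the proof of Proposition~\ref{snail} (through Proposition~\ref{psi-lower}) that the polynomial
\[
\Psi(z) := 1 + \frac{g_{n-1}}{g_n}z + \frac{g_{n-2}}{g_n}z^2 + \cdots + \frac{g_0}{g_n}z^n
\]
is admissible; this uses $g_n\neq 0$, which is precisely the standing hypothesis.

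The key observation is that every constraint depends only on the \emph{non-negative} Fourier modes of $\psi$. Indeed $\psi_0 = \hat\psi(0)$, and expanding
\[
\langle z^kf,\psi\rangle = \sum_{j\ge0}f_j\,\overline{\hat\psi(k+j)}
\]
shows that only the coefficients $\hat\psi(m)$ with $m\ge k\ge0$ ever appear. Consequently, for every $G\in zH^q$ the function $\psi = \Psi + \overline{G}$ remains admissible, since on the circle $\overline{G}$ contributes only strictly negative frequencies and therefore leaves $\hat\psi(0),\hat\psi(1),\dots$ unchanged. Passing to the infimum over this subfamily gives
\[
\inf\{\|\psi\|_q : \psi\ \text{admissible}\} \le \inf\{\|\Psi + \overline{G}\|_q : G\in zH^q\}.
\]

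Finally I would evaluate the right-hand infimum via the metric-projection proposition above, with the roles of $p$ and $q$ interchanged: since by hypothesis $\Psi = P_+\overline{h^{\langle p-1\rangle}}$ for some $h\in H^p$, that proposition identifies $\inf\{\|\Psi + \overline{G}\|_q : G\in zH^q\} = \|h\|_p^{p-1} = \|h^{\langle p-1\rangle}\|_q$, the extremal being attained at $\Psi + \overline{G} = \overline{h^{\langle p-1\rangle}}$ (existence of the minimizer following from uniform convexity of $L^q$). Reciprocating yields the claimed bound. The main point to get right is the bookkeeping in the last two paragraphs: checking that the annihilation constraints genuinely decouple from the negative Fourier modes, so that adding $\overline{G}$ preserves admissibility, and matching the conjugations and the $p\leftrightarrow q$ swap when invoking the metric-projection proposition. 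The improvement over Proposition~\ref{snail} is then transparent, since the choice $G=0$ recovers $\|\Psi\|_q$ while the infimum can only be smaller.
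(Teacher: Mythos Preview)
Your argument is correct and is precisely the route the paper intends: the proposition is stated without an explicit proof, but the surrounding discussion (the metric-projection proposition for $\inf_{G\in zH^p}\|F+\overline{G}\|_p$ together with the identification of $\Psi$ from Proposition~\ref{snail}) makes clear that the improvement comes from enlarging the admissible family in Lemma~\ref{duality} by adding $\overline{G}$ with $G\in zH^q$, exactly as you do. Your bookkeeping on the $p\leftrightarrow q$ swap and the identity $\|h\|_p^{p-1}=\|h^{\langle p-1\rangle}\|_q$ is accurate.
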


Let us now consider the case where $n \longrightarrow \infty$.  Then, by writing $f = JG$ for $J$ inner and $G$ outer, we see, for all $k \ge0$, that $\psi \in L^q$ satisfies
\begin{align*}
    0 &=  \langle z^k f, \psi \rangle \\
    &=  \langle z^k JG, \psi \rangle \\
    &=  \langle z^k G, \overline{J}\psi \rangle.
\end{align*}
As $\left\{z^kG : k\ge 0 \right\}$ is dense in $H^p$, we have, for any $n\ge 0$, 
\begin{align*}
   0 &= \langle z^n, \overline{J}\psi \rangle \\
    &=  \int_0^{2\pi} J(e^{i\theta}) \overline{\psi(e^{i\theta})} e^{in\theta}\,\frac{d\theta}{2\pi}.
\end{align*}

From this, it follows that $K(z) := J(z)\overline{\psi(z)}/z$ is an element of $H^q$.  We further divine that $\psi$ must be determined by
\[
     \psi(z) = J(z) \overline{zK(z)},\ z \in \mathbb{T},
\]
for some $K \in H^q$.  The condition $\psi(0) = 1$ takes the form
\[
      1=J_1 \overline{K_0} + J_2 \overline{K_1}+J_3 \overline{K_2}+\cdots.
\]

We now must minimize $\|J \overline{zK}\|_q$ subject to $K \in H^q$ satisfying the above constraint.
It is tempting to try $K = \overline{J}$, but this will not work, since 
\[
     0 = J_1 \overline{J_0} + J_2 \overline{J_1}+J_3 \overline{J_2}+\cdots.
\] 

Instead, take $K(z) = c [J(z)-J(0)]/z,$ where $c^{-1} = |J_1|^2 + |J_2|^2 +|J_3|^2+\cdots$.   Then 
\[
   J_1 \overline{K_0} + J_2 \overline{K_1}+J_3 \overline{K_2}+\cdots = c\left(|J_1|^2 + |J_2|^2 +|J_3|^2+\cdots\right) = 1,
\]
as needed.

Using this choice of $K$ to compute $\psi$, we obtain 
\begin{align*}
      \psi(z) &= J(z) \overline{zK(z)} \\
       &=  cJ(z) \overline{z} \ \frac{\overline{J(z)} - \overline{J(0)}}{\overline{z}}\\
       &=  c(1 - J(z) \overline{J(0)}).
\end{align*}

Since $ \|q_{n,p}[f]f-1\|_p \geq {1}/{\|\psi\|_q}$, this furnishes the following bound.

\begin{Proposition}
Let $1<p<\infty$, $1/p +1/q =1$, $n \in \mathbb{N}$, and $f \in H^p$ with $f(0) = 1$.  Then
\[
     \|q_{n,p}[f]f-1\|_p \geq   \frac{|J_1|^2 + |J_2|^2 +|J_3|^2+\cdots}{\|1- \overline{J(0)}J(z)\|_q},
\]
where $J$ is the inner part of $f$.
\end{Proposition}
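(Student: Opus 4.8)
The plan is to produce a single explicit dual certificate $\psi \in L^q$ that is admissible in Lemma \ref{duality} \emph{simultaneously for every} $n$, and then simply read off the bound. The candidate is exactly the function assembled in the discussion preceding the statement, namely
\[
\psi(z) = c\bigl(1 - \overline{J(0)}\,J(z)\bigr), \qquad c^{-1} := |J_1|^2 + |J_2|^2 + |J_3|^2 + \cdots,
\]
where $J$ is the inner part of $f$ and $J_k$ denotes the $k$-th Taylor coefficient of $J$. Since $J$ is inner it is bounded, so $\psi \in L^\infty \subseteq L^q$, and the construction is meaningful precisely when $f$ is not outer, so that the denominator $c^{-1}$ is strictly positive.

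First I would check the two feasibility conditions. For the normalization $\psi_0 = 1$, I would use that $|J| = 1$ a.e.\ on $\T$ forces $\sum_{k \ge 0} |J_k|^2 = \|J\|_2^2 = 1$, so that $c^{-1} = 1 - |J(0)|^2$ and hence $\psi_0 = c\bigl(1 - |J(0)|^2\bigr) = 1$. For the orthogonality conditions, I would factor $f = JG$ with $G$ outer and write $\psi = J\,\overline{zK}$ with $K(z) = c\,[J(z) - J(0)]/z \in H^q$, exactly as in the preceding paragraph; then on $\T$,
\[
z^k f\,\overline{\psi} = z^k (JG)\,\bigl(\overline{J}\,zK\bigr) = z^{k+1}\,|J|^2\,GK = z^{k+1} GK,
\]
and since $GK \in H^1$ the integral $\int_\T z^{k+1}GK\,dm$ vanishes for every $k \ge 0$. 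In particular $\psi$ annihilates $z^k f$ for all $0 \le k \le n$, so it is admissible in Lemma \ref{duality} for each fixed $n$.

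With feasibility in hand the bound is immediate. Lemma \ref{duality} gives $\|q_{n,p}[f]f - 1\|_p = \bigl[\inf \|\psi\|_q\bigr]^{-1}$, and since the infimum is at most the norm of our particular certificate, $\|q_{n,p}[f]f - 1\|_p \ge 1/\|\psi\|_q$. Finally I would compute
\[
\|\psi\|_q = c\,\bigl\|1 - \overline{J(0)}J\bigr\|_q = \frac{\bigl\|1 - \overline{J(0)}J\bigr\|_q}{|J_1|^2 + |J_2|^2 + \cdots}
\]
and substitute to obtain the claimed inequality.

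The only genuinely delicate point is conceptual rather than computational: the construction in the preceding discussion was framed as the limiting case $n \to \infty$, but the resulting $\psi$ annihilates the \emph{entire} subspace $[f]_p$, hence is feasible for every finite $n$ at once. This is what allows one fixed dual function to yield a lower bound that is uniform in $n$. I would also flag the degenerate case in which $J$ is a unimodular constant (i.e.\ $f$ is outer up to a constant): there both numerator and denominator vanish and the statement is to be read trivially, so the content of the proposition lies entirely in the case of nonconstant $J$.
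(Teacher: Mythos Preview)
Your proposal is correct and follows essentially the same approach as the paper: you take the dual certificate $\psi = c(1-\overline{J(0)}J)$ built in the discussion immediately preceding the proposition, verify its feasibility for Lemma~\ref{duality}, and read off the bound. Your explicit remark that this single $\psi$ annihilates all of $[f]_p$ (hence is admissible for every finite $n$) and your handling of the degenerate outer case are clean additions, but the underlying argument is the paper's own.
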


Incidentally, $c^{-1} = \left(|J_1|^2 + |J_2|^2 +|J_3|^2+\cdots\right) = \|J\|_2^2 - |J(0)|^2 = 1 - |J(0)|^2$, so the lower bound above could be written equivalently as
\[
     \frac{1 - |J(0)|^2}{\left\|(1 - |J(0)|^2)  - \overline{J(0)}(J_1 z + J_2 z^2 + \cdots)\right\|_q},
\]
which is obviously no greater than 1, as needed.

We now step away from duality. Our final results concern OPA errors, but are proven with $H^2$ methods.
The following proposition should be compared with Proposition \ref{poly-lower}; although the result below provides a better bound, it holds only for $p>2$. 

\begin{Proposition}
   Let $2<p<\infty$, and suppose $f \in H^p$ has a factorization $f = JG$, where $J$ is inner and $G$ is outer.  If $f(0)\neq 0$, then
   for any $n \in \mathbb{N}$, 
   \[
         \| q_{n,p}[f] f -1\|_p  \geq \sqrt{1 - |J(0)|^2}.
   \]
\end{Proposition}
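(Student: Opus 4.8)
The plan is to bound the $H^p$ error from below by a comparable $H^2$ error and then compute that $H^2$ quantity exactly using Hilbert space geometry; this is precisely the ``$H^2$ method'' announced before the statement, and it explains both the restriction to $p>2$ and the appearance of $\sqrt{1-|J(0)|^2}$. First I would observe that, since $p>2$ and $dm$ is a probability measure, monotonicity of the $L^r$ norms gives $\|\cdot\|_2 \le \|\cdot\|_p$, so that
\[
\|q_{n,p}[f]f-1\|_p \ \ge\ \|q_{n,p}[f]f-1\|_2 .
\]
Because $q_{n,p}[f]\in\Pol_n$, the right-hand side is at least $\inf_{q\in\Pol_n}\|qf-1\|_2$; enlarging the competitor class from $\Pol_n$ to all polynomials only lowers the infimum, so in fact
\[
\|q_{n,p}[f]f-1\|_p \ \ge\ \operatorname{dist}_2\big(1,[f]_2\big),
\]
where $[f]_2$ is the $H^2$-closure of $f\Pol$. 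Note $f\in H^p\subseteq H^2$ here since $p>2$, so all of these $H^2$ quantities make sense.

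Next I would identify $[f]_2$ via the inner--outer factorization. Since $f=JG$ with $J$ inner and $G$ outer, multiplication by $J$ is an isometry of $H^2$ and $G$ is cyclic for the shift in $H^2$, whence $[f]_2 = J[G]_2 = JH^2$. It then remains only to compute $\operatorname{dist}_2(1,JH^2)$. For $g\in H^2$, expanding the square and using $|J|=1$ a.e. on $\T$ yields
\[
\|1-Jg\|_2^2 \ =\ 1 \ -\ 2\,\mathrm{Re}\!\big(J(0)\,g(0)\big)\ +\ \|g\|_2^2 .
\]
The Fourier coefficients of $g$ beyond the constant term contribute only to $\|g\|_2^2$ and not to the cross term, so the minimizer is the constant $g\equiv\overline{J(0)}$, giving minimum value $1-|J(0)|^2$. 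Taking square roots closes the chain and delivers the stated bound. Equivalently, one may phrase this as $\operatorname{dist}_2(1,JH^2)=\|P_{K_J}1\|_2=\|1-\overline{J(0)}J\|_2$, recognizing $1-\overline{J(0)}J$ as the reproducing kernel of the model space $K_J=H^2\ominus JH^2$ evaluated at the origin, whose norm is again $\sqrt{1-|J(0)|^2}$.

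I expect the only genuinely structural step to be the identification $[f]_2=JH^2$, which rests on the cyclicity of the outer factor $G$ in $H^2$ and on the fact that $f\in H^2$ when $p>2$; the remaining distance computation is a routine one-variable minimization. I would also emphasize, as the remark accompanying the statement suggests, that the argument genuinely requires $p\ge 2$ for the domination $\|\cdot\|_2\le\|\cdot\|_p$, which is exactly why the conclusion is not asserted for $1<p<2$.
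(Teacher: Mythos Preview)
Your argument is correct and follows essentially the same path as the paper: dominate the $H^p$ error by the $H^2$ error via norm monotonicity for $p>2$, pass to the full shift-invariant subspace using cyclicity of the outer factor, and compute the resulting $H^2$ distance. The only cosmetic difference is that the paper multiplies by $\overline{J}$ to rewrite $\inf_{Q}\|QJG-1\|_2=\inf_{Q}\|QG-\overline{J}\|_2=\|\overline{J(0)}-\overline{J}\|_2$, whereas you compute $\operatorname{dist}_2(1,JH^2)$ directly; both yield $\sqrt{1-|J(0)|^2}$.
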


\begin{proof}  Let $\mathscr{P}$ be the collection of all polynomials. Then
   \begin{align*}
       \| q_{n,p}[f] f -1\|_p &\geq  \inf_{Q \in \mathscr{P}}\|Qf -1\|_p\\
          & \geq  \inf_{Q \in \mathscr{P}}\|Qf -1\|_2\\
          & =  \inf_{Q \in \mathscr{P}}\|QJG -1\|_2\\
          & =  \inf_{Q \in \mathscr{P}}\|QG -\overline{J}\|_2\\
          & =  \|\overline{J(0)} -\overline{J}\|_2,
   \end{align*}
with the last equality following from the fact that since $G$ is outer, then the set $\{QG : Q\in \Pol\}$ is dense in $H^2$. 
Now use 
\[
     1 = \|J\|_2^2 = |J(0)|^2 + \|J - J(0)\|_2^2.
\]
\end{proof}

Note further that if $J=B$ is a Blaschke product, then this implies that
 \[
         \| q_{n,p}[f] f -1\|_p  \geq \|{J(0) - J}\|_2 = \sqrt{1 - |B(0)|^2} = \sqrt{1 - |w_1 w_2 w_3\cdots|^2},
   \]
where $w_1, w_2, w_3,\ldots$ are the zeros of $B$.

We end by providing a related result when $1<p<2$.

\begin{Proposition}
Let $1<p<2$ and suppose  $f(0)\neq 0$. Then for any $n \in \mathbb{N}$, 
\[
\| q_{n,p}[f] f -1\|_p  \leq \sqrt{1 - (q_{n,2}[f]f)(0)}.
\]
\end{Proposition}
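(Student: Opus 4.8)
The plan is to bound the $H^p$ error by the error of a \emph{suboptimal} choice of polynomial---namely the Hilbert-space optimal approximant $q_{n,2}[f]$---and then to evaluate that $H^2$ error exactly. Throughout I note that since $dm$ is a probability measure and $1<p<2$, we have the nesting $H^2 \subseteq H^p$ with $\|g\|_p \le \|g\|_2$ for every $g \in L^2$; thus the (implicit) assumption $f \in H^2$ guarantees that both $q_{n,p}[f]$ and $q_{n,2}[f]$ are defined and that $q_{n,2}[f]f \in H^2$.

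First I would invoke optimality of $q_{n,p}[f]$ over $\Pol_n$ in the $H^p$ norm: since $q_{n,2}[f] \in \Pol_n$,
\[
\|q_{n,p}[f]f - 1\|_p \le \|q_{n,2}[f]f - 1\|_p.
\]
Then, because $1 < p < 2$ and $m$ is normalized, H\"older's inequality (with exponents $2/p$ and its conjugate) gives $\|q_{n,2}[f]f - 1\|_p \le \|q_{n,2}[f]f - 1\|_2$, so that
\[
\|q_{n,p}[f]f - 1\|_p \le \|q_{n,2}[f]f - 1\|_2.
\]

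It remains to compute the right-hand side. Here I would use the $H^2$ orthogonality relation $1 - q_{n,2}[f]f \ \perp_2\ \bigvee\{f, zf, \dots, z^nf\}$; in particular $1 - q_{n,2}[f]f$ is orthogonal to $q_{n,2}[f]f$ itself. Consequently,
\[
\|1 - q_{n,2}[f]f\|_2^2 = \langle 1 - q_{n,2}[f]f,\, 1 \rangle - \langle 1 - q_{n,2}[f]f,\, q_{n,2}[f]f\rangle = \langle 1 - q_{n,2}[f]f,\, 1\rangle.
\]
Since $1 - q_{n,2}[f]f \in H^2$, the mean-value property $\langle g, 1\rangle = \int_\T g\, dm = g(0)$ yields $\langle 1 - q_{n,2}[f]f,\, 1\rangle = 1 - (q_{n,2}[f]f)(0)$. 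Taking square roots gives $\|q_{n,2}[f]f - 1\|_2 = \sqrt{1 - (q_{n,2}[f]f)(0)}$, and combining with the previous display establishes the claim.

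I do not anticipate a serious obstacle: the argument is a short chain of inequalities followed by an exact Hilbert-space computation. The only points requiring care are (i) recording that $f \in H^2$, so that both approximants exist and $q_{n,2}[f]f \in H^2$, and (ii) observing that the radicand is automatically a nonnegative real number. The identity above exhibits $1 - (q_{n,2}[f]f)(0)$ as the squared norm $\|1 - q_{n,2}[f]f\|_2^2$, so $(q_{n,2}[f]f)(0)$ is real; moreover $\langle q_{n,2}[f]f, 1\rangle = \|q_{n,2}[f]f\|_2^2$ forces $(q_{n,2}[f]f)(0) \in [0,1]$, the upper bound following from the fact that the orthogonal projection of $1$ onto $f\Pol_n$ is an $H^2$-contraction. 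This makes the square root well defined.
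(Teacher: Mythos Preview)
Your argument is correct and follows exactly the same chain of inequalities as the paper: optimality in $H^p$, then the monotonicity $\|\cdot\|_p\le\|\cdot\|_2$, then the identity $\|q_{n,2}[f]f-1\|_2^2=1-(q_{n,2}[f]f)(0)$. The paper simply cites the linear system \eqref{opt-sys} for the last equality, whereas you derive it directly from the orthogonality $1-q_{n,2}[f]f\perp_2 q_{n,2}[f]f$; these are equivalent. Your remarks that one needs $f\in H^2$ (not automatic when $1<p<2$) and that the radicand equals $\|1-q_{n,2}[f]f\|_2^2\ge 0$ are welcome clarifications that the paper leaves implicit.
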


\begin{proof}  Routine bounds yield
   \begin{align*}
      \| q_{n,p}[f] f -1\|_p  &\leq  \| q_{n,2}[f] f -1\|_p\\
         &\leq \| q_{n,2}[f] f -1\|_2 \\
         & = \sqrt{1 - (q_{n,2}[f]f)(0)},
   \end{align*}
where the last equality is a consequence of the linear system described in Equation \ref{opt-sys}.
\end{proof}
Taking $n = 0$ above, we have the simple bound
\[
\| q_{n,p}[f] f -1\|_p  \leq \left(1 - \frac{|f(0)|^2}{\|f\|_2^{2}}\right)^{1/2}.
\]




\bibliographystyle{abbrv}

\bibliography{OPAs_ACT_II_FINAL.bib}

\def\cprime{$'$} \def\cprime{$'$} \def\cprime{$'$}
\begin{thebibliography}{10}

\bibitem{MR4396661}
A.~Acuaviva and D.~Seco.
\newblock Equidistribution of zeros of some polynomials related to cyclic
  functions.
\newblock {\em Anal. Math. Phys.}, 12(2):Paper No. 53, 24, 2022.

\bibitem{AMW}
J.~Alonso, H.~Martini, and S.~Wu.
\newblock On {B}irkhoff orthogonality and isosceles orthogonality in normed
  linear spaces.
\newblock {\em Aequationes Math.}, 83(1-2):153--189, 2012.

\bibitem{arora2022optimal}
P.~Arora, M.~Augat, M.~Jury, and M.~Sargent.
\newblock An optimal approximation problem for free polynomials.
\newblock {\em arXiv preprint arXiv:2209.10373}, 2022.

\bibitem{BC}
C.~B\'{e}n\'{e}teau and R.~Centner.
\newblock A survey of optimal polynomial approximants, applications to digital
  filter design, and related open problems.
\newblock {\em Complex Anal. Synerg.}, 7(2), 2021.

\bibitem{BCLSS1}
C.~B\'{e}n\'{e}teau, A.~A. Condori, C.~Liaw, D.~Seco, and A.~A. Sola.
\newblock Cyclicity in {D}irichlet-type spaces and extremal polynomials.
\newblock {\em J. Anal. Math.}, 126:259--286, 2015.

\bibitem{BCLSS2}
C.~B\'{e}n\'{e}teau, A.~A. Condori, C.~Liaw, D.~Seco, and A.~A. Sola.
\newblock Cyclicity in {D}irichlet-type spaces and extremal polynomials {II}:
  {F}unctions on the bidisk.
\newblock {\em Pacific J. Math.}, 276:35--58, 2015.

\bibitem{BFKSS}
C.~B\'en\'eteau, M.~Fleeman, D.~Khavinson, D.~Seco, and A.~Sola.
\newblock Remarks on inner functions and optimal approximants.
\newblock {\em Canad. Math. Bull.}, 61:704--716, 2018.

\bibitem{BKLSS}
C.~B\'{e}n\'{e}teau, D.~Khavinson, C.~Liaw, D.~Seco, and B.~Simanek.
\newblock Zeros of optimal polynomial approximants: {J}acobi matrices and
  {J}entzsch-type theorems.
\newblock {\em Revista Matem\'atica Iberoamericana}, 35(2):607--642, 2019.

\bibitem{BKKLSS}
C.~B\'{e}n\'{e}teau, G.~Knese, L.~Kosi\'{n}ski, C.~Liaw, D.~Seco, and A.~Sola.
\newblock Cyclic polynomials in two variables.
\newblock {\em Trans. Amer. Math. Soc.}, 368(12):8737--8754, 2016.

\bibitem{Cent}
R.~Centner.
\newblock Optimal polynomial approximants in ${L}^p$.
\newblock {\em Concr. Oper.}, 9(1):96--113, 2022.

\bibitem{CCF}
R.~Centner, R.~Cheng, and C.~Felder.
\newblock Optimal polynomial approximants in ${H}^p$.
\newblock Preprint.

\bibitem{CMR}
R.~Cheng, J.~Mashreghi, and W.~T. Ross.
\newblock {\em Function Theory and $\ell^p$ Spaces}, volume~75 of {\em
  University Lecture Series}.
\newblock American Mathematical Society, Providence, R.I., 2020.

\bibitem{CR}
R.~Cheng and W.~T. Ross.
\newblock Weak parallelogram laws on {B}anach spaces and applications to
  prediction.
\newblock {\em Period. Math. Hungar.}, 71(1):45--58, 2015.

\bibitem{CRSX}
R.~Cheng, W.~T. Ross, and D.~Seco.
\newblock Zeros of optimal polynomial approximants in $\ell^p_{A}$.
\newblock {\em Adv. Math.}, 404(108396), 2022.

\bibitem{FMS}
E.~Fricain, J.~Mashreghi, and D.~Seco.
\newblock Cyclicity in reproducing kernel {H}ilbert spaces of analytic
  functions.
\newblock {\em Comput. Methods Funct. Theory}, 14(4):665--680, 2014.

\bibitem{Jam}
R.~C. James.
\newblock Orthogonality and linear functionals in normed linear spaces.
\newblock {\em Trans. Amer. Math. Soc.}, 61:265--292, 1947.

\bibitem{SS2}
M.~Sargent and A.~A. Sola.
\newblock Optimal approximants and orthogonal polynomials in several variables:
  Families of polynomials in the unit ball.
\newblock {\em Proc. Amer. Math. Soc.}, 149(12), 2021.

\bibitem{SS1}
M.~Sargent and A.~A. Sola.
\newblock Optimal approximants and orthogonal polynomials in several variables.
\newblock {\em Canad. J. Math.}, 74(2):428--456, 2022.

\end{thebibliography}


\end{document}